\newtheorem{theorem}{Theorem}[section]
\newtheorem{lemma}{Lemma}[section]
\newtheorem{corollary}{Corollary}[section]
\title{Complex Divisor Functions}
\author{Colin Defant}
\address{University of Florida \\ Department of Mathematics}
\email{cdefant@ufl.edu}
\begin{document}

\maketitle

\begin{abstract} For any complex number $c$, let $\sigma_c\colon\mathbb N\rightarrow\mathbb C$ denote the divisor function defined by $\sigma_c(n)=\displaystyle{\sum_{d\vert n}d^c}$ for all $n\in\mathbb N$, and define $R(c)=\{\sigma_c(n)\in\mathbb C\colon n\in\mathbb N\}$ to be the range of $\sigma_c$. We study the basic topological properties of the sets $R(c)$. In particular, we determine the complex numbers $c$ for which $R(c)$ is bounded and determine the isolated points of the sets $R(c)$. In the third section, we find those values of $c$ for which $R(c)$ is dense in $\mathbb C$. We also prove some results and pose several open problems about the closures of the sets $R(c)$ when these sets are bounded.  
\end{abstract} 

\section{Introduction} 
Throughout this article, we will let $\mathbb N$, $\mathbb N_0$, and $\mathbb P$ denote the set of positive integers, the set of nonnegative integers, and the set of prime numbers, respectively. The lowercase letter $p$ will always denote a prime number, and $\nu_p(n)$ will denote the exponent of $p$ in the prime factorization of a positive integer $n$. The letter $\zeta$ will always denote the Riemann zeta function. Furthermore, for any nonzero complex number $z$, we let $\arg(z)$ denote the principal argument of $z$ with the convention that $-\pi<\arg(z)\leq\pi$. 
\par 
For any complex number $c$, the divisor function $\sigma_c\colon\mathbb{N}\rightarrow\mathbb C$ is the arithmetic function defined by $\sigma_c(n)=\displaystyle{\sum_{d\vert n}d^c}$ for all $n\in\mathbb N$. The function $\sigma_c$ is a multiplicative arithmetic function that satisfies $\sigma_c(p^\alpha)=1+p^c+\cdots+p^{\alpha c}$ for all primes $p$ and positive integers $\alpha$. Of course, if $p^c\neq 1$, then we may write $\sigma_c(p^{\alpha})=\displaystyle{\frac{p^{(\alpha+1)c}-1}{p^c-1}}$. Divisor functions are some of the most important functions in number theory; their appearances in various identities and applications are so numerous that we will not even attempt to list them. However, divisor functions other than $\sigma_1,\sigma_0$, and $\sigma_{-1}$ are rarely studied. Ramanujan did study the functions $\sigma_c$ for $c$ real, but his results were only published in the last two decades \cite{Andrews12,Ramanujan97}. Recently, the author \cite{Defant14} has studied the ranges of the functions $\sigma_c$ for real $c$ and has shown that there exists a constant $\eta\approx 1.8877909$ such that if $r\in(1,\infty)$, then the range of the function $\sigma_{-r}$ is dense in the interval $[1,\zeta(r))$ if and only if $r\leq\eta$. For any complex $c$, we will let $R(c)=\{\sigma_c(n)\colon n\in\mathbb{N}\}$ be the range of the function $\sigma_c$. In this article, we will study the basic topological properties of the sets $R(c)$ for various complex numbers $c$. More specifically, we will direct the bulk of our attention toward answering the following questions: 
\begin{enumerate} 
\item For which complex $c$ is $R(c)$ bounded? 
\item For which complex $c$ does $R(c)$ have isolated points?   
\item What can we tell about the closure $\overline{R(c)}$ of the set $R(c)$ for a given value of $c$? In particular, what are the values of $c$ for which $R(c)$ is dense in $\mathbb C$?  
\end{enumerate}    
We begin with a number of useful lemmas. Some of these lemmas not only aid in the proofs of later theorems, but also provide some basic yet interesting information that serves as a nice introduction to the sets $R(c)$. Henceforth, $c$ will denote a complex number with real part $a$ and imaginary part $b$. Recall that any complex number $z$ may be written as \[z=u+iv=re^{i\theta}=r(\cos\theta+i\sin\theta)\] so that $u=r\cos\theta$, $v=r\sin\theta$, $r=\sqrt{u^2+v^2}$, and $\tan\theta=v/u$. Therefore, for any positive real number $x$, we have \begin{equation}\label{EqEdit1}
\vert 1+x^c\vert^2=\vert 1+e^{c\log x}\vert^2=(1+x^a\cos(b\log x))^2+x^{2a}\sin^2(b\log x)
\end{equation} \[=1+2x^a\cos(b\log x)+x^{2a}\] and \begin{equation}\label{EqEdit2}
\tan\left(\arg(1+x^c)\right)=\frac{x^a\sin(b\log x)}{1+x^a\cos(b\log x)}.
\end{equation}
\begin{lemma} \label{Lem1.1}
For any $n\in\mathbb{N}$, $\sigma_{\bar c}(n)=\overline{\sigma_c(n)}$.  
\end{lemma}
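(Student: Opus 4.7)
The plan is to reduce the identity to a pointwise statement about each divisor and then exploit the additivity of complex conjugation. Specifically, I will argue that for every positive integer $d$ one has $d^{\bar c} = \overline{d^c}$, and then sum over the divisors of $n$.

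For the pointwise step, I would write $d^c = \exp(c \log d)$ using the principal (real) logarithm of the positive integer $d$, so $\log d \in \mathbb{R}$. Expanding with $c = a+bi$ gives
\[
d^c = d^a\bigl(\cos(b \log d) + i\sin(b \log d)\bigr).
\]
Taking the complex conjugate flips only the sign of $b$, which is exactly what replacing $c$ with $\bar c = a - bi$ does, yielding $\overline{d^c} = d^{\bar c}$. The crucial point is that $\log d$ is real, which is why this does not work for arbitrary complex bases but does work for divisors of a positive integer.

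Once the pointwise identity is in place, the rest is immediate: since conjugation is additive on $\mathbb{C}$,
\[
\sigma_{\bar c}(n) = \sum_{d \mid n} d^{\bar c} = \sum_{d \mid n} \overline{d^c} = \overline{\sum_{d \mid n} d^c} = \overline{\sigma_c(n)}.
\]

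There is no real obstacle; the only thing to be careful about is to justify $d^{\bar c} = \overline{d^c}$ rigorously via the definition of complex exponentiation with a positive real base, rather than invoking it as if it were automatic for arbitrary bases.
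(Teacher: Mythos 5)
Your proof is correct and takes essentially the same approach as the paper: both reduce to the pointwise fact that $d^{\bar c} = \overline{d^c}$ for positive real $d$ (the paper factors $d^{\bar c} = d^a \cdot d^{-bi}$ and conjugates termwise, while you expand via Euler's formula) and then sum over divisors using the additivity of conjugation.
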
 
\begin{proof} 
We have \[\sigma_{\bar{c}}(n)=\sum_{d\vert n}d^{a-bi}=\sum_{d\vert n}d^ad^{-bi}=\sum_{d\vert n}\overline{d^a}\cdot\overline{d^{bi}}=\overline{\sum_{d\vert n}d^{a+bi}}=\overline{\sigma_c(n)}.\] 
\end{proof}
Lemma \ref{Lem1.1} tells us that $R(\overline c)$ is simply the reflection of the set $R(c)$ about the real axis. In many situations, this simple but useful lemma allows us to restrict our attention to complex numbers $c$ in the upper half plane and then use symmetry to deduce similar results for values of $c$ in the lower half-plane. 
\begin{lemma} \label{Lem1.2}
We have $0\in R(c)$ if and only if $a=0$ and $b=q\displaystyle{\frac{\pi}{\log p}}$ for some prime $p$ and some rational number $q$ that is not an even integer. 
\end{lemma}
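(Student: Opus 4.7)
The plan is to exploit the multiplicativity of $\sigma_c$, which forces $\sigma_c(n) = 0$ to happen if and only if some local factor $\sigma_c(p^\alpha)$ (with $\alpha \geq 1$) vanishes. So I would begin by reducing the question to: for which $c$ does there exist a prime $p$ and a positive integer $\alpha$ with $\sigma_c(p^\alpha) = 0$?

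Next I would handle this local question. If $p^c = 1$, then $\sigma_c(p^\alpha) = \alpha + 1 \neq 0$, so we may assume $p^c \neq 1$ and use the closed form $\sigma_c(p^\alpha) = (p^{(\alpha+1)c} - 1)/(p^c - 1)$. This vanishes exactly when $p^{(\alpha+1)c} = 1$ while $p^c \neq 1$. Writing $c = a + bi$, the equation $p^{(\alpha+1)c} = 1$ unfolds as $(\alpha+1)(a+bi)\log p = 2\pi i k$ for some $k \in \mathbb{Z}$; matching real and imaginary parts forces $a = 0$ and $b = 2\pi k / ((\alpha+1)\log p)$, which is of the stated form $b = q\pi/\log p$ with rational $q = 2k/(\alpha+1)$. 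Under $a = 0$, the constraint $p^c \neq 1$ becomes $e^{iq\pi} \neq 1$, i.e., $q$ is not an even integer.

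For the converse, given a prime $p$ and a rational $q = m/n$ (with $n \geq 1$) that is not an even integer, I would show that $c = iq\pi/\log p$ lies in the target set by exhibiting an explicit $\alpha$. Taking $\alpha + 1 = 2n$ gives $\alpha \geq 1$ and makes $p^{(\alpha+1)c} = e^{2mi\pi} = 1$; meanwhile $p^c = e^{im\pi/n} \neq 1$ precisely because $q = m/n$ is not an even integer. Hence $\sigma_c(p^\alpha) = 0$, so $0 \in R(c)$ via $n = p^\alpha$.

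The calculations are all essentially routine once the setup is in place, so no step is a serious obstacle. The main subtlety is being careful about the distinction between $q$ being an integer versus being an even integer: the even-integer exclusion arises purely from needing $p^c \neq 1$ to justify the geometric-series formula, not from the vanishing condition itself, and this must be tracked in both directions of the equivalence.
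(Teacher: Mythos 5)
Your proposal is correct and mirrors the paper's proof almost exactly: both reduce to the prime-power case via multiplicativity, rule out $p^c=1$ by noting $\sigma_c(p^\alpha)=\alpha+1$, solve $p^{(\alpha+1)c}=1$ by matching real and imaginary parts, and for the converse write $q=m/n$ and take $\alpha=2n-1$ (the paper's $\alpha=2m-1$ with $q=\ell/m$). The only differences are notational.
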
 
\begin{proof} 
First, suppose $a=0$ and $\displaystyle{b=q\frac{\pi}{\log p}}$, where $p$ is a prime and $q$ is a rational number that is not an even integer. As $q$ is not an even integer, $p^c=p^{bi}=e^{q\pi i}\neq 1$. We may write $q=\displaystyle{\frac{\ell}{m}}$ for some nonzero integers $\ell$ and $m$ with $m>0$. Then
\[\sigma_c(p^{2m-1})=\frac{p^{2mc}-1}{p^c-1}=\frac{p^{2mbi}-1}{p^c-1}=\frac{e^{2\ell\pi i}-1}{p^c-1}=0,\] so $0\in R(c)$. 
\par 
Conversely, suppose $0\in R(c)$. Then there exists some $n\in\mathbb N$ with $\sigma_c(n)=0$. Clearly $n>1$, so we may let $n=p_1^{\alpha_1}\cdots p_r^{\alpha_r}$ be the canonical prime factorization of $n$. Then $0=\sigma_c(n)=\sigma_c(p_1^{\alpha_1})\cdots\sigma_c(p_r^{\alpha_r})$, so $\sigma_c(p_i^{\alpha_i})=0$ for some $i\in\{1,\ldots,r\}$. Let $p=p_i$ and $\alpha=\alpha_i$. We know that $p^c\neq 1$ because, otherwise, we would have $\sigma_c(p^\alpha)=1+p^c+\cdots+p^{\alpha c}=\alpha+1\neq 0$. Therefore, $0=\sigma_c(p^{\alpha})=\displaystyle{\frac{p^{(\alpha+1)c}-1}{p^c-1}}$, so $p^{(\alpha+1)c}=1$. Now, $p^{(\alpha+1)c}=p^{(\alpha+1)a}e^{(\alpha+1)b(\log p) i}$ so we must have $p^{(\alpha+1)a}=1$ and $e^{(\alpha+1)b(\log p)i}=1$. Consequently, $a=0$ and $b=\displaystyle{\frac{2k\pi}{(\alpha+1)\log p}}$ for some integer $k$. Letting $q=\displaystyle{\frac{2k}{\alpha+1}}$, we see that $b$ has the desired form. Finally, $q$ is not an even integer because $e^{q\pi i}=p^c\neq 1$.  
\end{proof} 
\begin{lemma} \label{Lem1.3}
Suppose $a=0$ and $b\neq 0$. Let $\Psi(c)=\{\sigma_c(p)\colon p\in\mathbb P\}$, and let $C$ be the circle $\{1+z\in\mathbb C\colon\vert z\vert=1\}$. Then $\Psi(c)$ is a dense subset of $C$. 
\end{lemma}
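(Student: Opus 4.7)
The plan is to first reduce the statement to an equidistribution-style claim about $\{\log p \bmod L\}$ for primes $p$, and then deduce that from the fact that the prime gaps grow sufficiently slowly.

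First, I would observe that when $a=0$, for every prime $p$ we have $p^c = p^{bi} = e^{ib \log p}$, which lies on the unit circle. Hence $\sigma_c(p) = 1 + p^c = 1 + e^{ib \log p} \in C$, so $\Psi(c) \subseteq C$. Therefore it suffices to prove that $\{p^c : p \in \mathbb P\}$ is dense in the unit circle $\{|z|=1\}$, or equivalently that the set $\{b \log p \bmod 2\pi : p \in \mathbb P\}$ is dense in $[0, 2\pi)$. Rescaling by $1/|b|$ (which is legitimate because $b \neq 0$), the problem becomes: show that $\{\log p \bmod L : p \in \mathbb P\}$ is dense in $[0, L)$, where $L = 2\pi/|b|$.

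Next I would invoke the following elementary principle: if $(x_n)$ is a strictly increasing sequence of positive reals with $x_n \to \infty$ and $x_{n+1} - x_n \to 0$, then for any $L > 0$ the set $\{x_n \bmod L\}$ is dense in $[0, L)$. The argument is a short $\epsilon$-net argument: for $n$ large enough, consecutive terms $x_n \bmod L$ differ (after possibly wrapping around $L$) by less than $\epsilon$, and since $x_n \to \infty$, the sequence $x_n \bmod L$ traverses $[0,L)$ infinitely often, hitting every subinterval of length $\epsilon$.

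I would then apply this principle to $x_n = \log p_n$, where $p_n$ is the $n$th prime. The key input is $\log p_{n+1} - \log p_n = \log(p_{n+1}/p_n) \to 0$, which follows from the Prime Number Theorem (since $p_n \sim n \log n$ gives $p_{n+1}/p_n \to 1$); a much weaker gap estimate would also suffice. Combining these steps yields the density of $\{\log p \bmod L\}$ in $[0,L)$, and hence the density of $\Psi(c)$ in $C$.

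The only real obstacle is the last input, namely that $p_{n+1}/p_n \to 1$. This is the one nontrivial analytic fact used, but it is standard and may simply be cited from the PNT; once it is in hand, the remainder of the argument is entirely elementary.
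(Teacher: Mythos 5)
Your proposal is correct and follows essentially the same route as the paper: reduce to showing $\{p^c\colon p\in\mathbb P\}$ is dense in the unit circle, translate this to a statement about $b\log p$ modulo $2\pi$, and then use $p_{k+1}/p_k\to 1$ to conclude. The only cosmetic difference is that you handle the sign of $b$ by rescaling rather than by invoking Lemma~\ref{Lem1.1}, and you package the final step as a general lemma about sequences with vanishing gaps, whereas the paper carries out the same $\epsilon$-net argument explicitly with the intervals $(\lambda\mu^n,\lambda\mu^n\delta)$.
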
 
\begin{proof} 
By Lemma \ref{Lem1.1}, it suffices to prove our claim in the case $b>0$. Furthermore, because $\sigma_c(p)=1+p^c$ for all primes $p$, it suffices to show that the set $\Psi'(c)=\{p^c\colon p\in\mathbb P\}$ is a dence subset of the circle $C'=\{z\in\mathbb C\colon\vert z\vert=1\}$. We know that every point in $\Psi'(c)$ lies on the circle $C'$ because $\vert p^c\vert=p^a=1$ for all primes $p$. Now, choose some $z\in C'$ and some $\epsilon>0$. We may write $z=e^{i\varphi}$ for some $\varphi\in(-\pi,\pi]$. We wish to show that there exists a prime $p$ such that $\vert\arg(p^c)-\varphi+2t\pi\vert<\epsilon$ for some integer $t$. Equivalently, we need to show that there exists a prime $p$ and a positive integer $n$ such that $\varphi+2n\pi-\epsilon<b\log p<\varphi+2n\pi+\epsilon$ (this is because $\arg(p^c)=b\log p+2\pi m$ for some $m\in\mathbb Z$). Setting $\lambda=e^{(\varphi-\epsilon)/b}$, $\mu=e^{2\pi/b}$, and $\delta=e^{2\epsilon/b}$, we may rewrite these inequalities as $\lambda\mu^n<p<\lambda\mu^n\delta$. It follows from the well-known fact that $\displaystyle{\lim_{k\rightarrow\infty}\frac{p_{k+1}}{p_k}=1}$ that such a prime $p$ is guaranteed to exist for sufficiently large $n$ (here, we let $p_i$ denote the $i^{th}$ prime number). 
\end{proof} 
\begin{lemma} \label{Lem1.4}
If $a>0$, then $\vert\sigma_c(n)\vert\geq\displaystyle{\prod_{p<2^{1/a}}(p^a-1)}$ for all $n\in\mathbb N$.
\end{lemma}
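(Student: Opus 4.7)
The plan is to use the multiplicativity of $\sigma_c$ and then reduce to a per-prime-power estimate. Since $a>0$, we have $p^a>1$ and in particular $p^c \neq 1$ for every prime $p$, so the formula $\sigma_c(p^\alpha)=\dfrac{p^{(\alpha+1)c}-1}{p^c-1}$ is available. I would bound the numerator and denominator separately by the triangle inequality: the reverse triangle inequality gives $|p^{(\alpha+1)c}-1|\ge p^{(\alpha+1)a}-1$, while the ordinary triangle inequality gives $|p^c-1|\le p^a+1$. Dividing these yields
\[
|\sigma_c(p^\alpha)| \;\ge\; \frac{p^{(\alpha+1)a}-1}{p^a+1}.
\]

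The key algebraic claim is that this lower bound is in fact at least $p^a-1$ for every $\alpha\ge 1$. Clearing denominators, this is equivalent to $p^{(\alpha+1)a}-1 \ge (p^a-1)(p^a+1) = p^{2a}-1$, i.e.\ $(\alpha+1)a\ge 2a$, which holds for any $\alpha\ge 1$. Thus $|\sigma_c(p^\alpha)|\ge p^a-1$ for every prime $p$ and every exponent $\alpha\ge 1$.

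Combining via multiplicativity, if $n=p_1^{\alpha_1}\cdots p_r^{\alpha_r}$ is the prime factorization of $n$, then
\[
|\sigma_c(n)| \;=\; \prod_{i=1}^{r} |\sigma_c(p_i^{\alpha_i})| \;\ge\; \prod_{i=1}^{r} (p_i^a-1).
\]
Now I would massage the right-hand product to match the stated bound. Since each factor with $p_i\ge 2^{1/a}$ satisfies $p_i^a-1\ge 1$, dropping those factors only makes the product smaller, giving $|\sigma_c(n)|\ge \prod_{p_i<2^{1/a}}(p_i^a-1)$. Conversely, inserting factors $p^a-1<1$ for small primes $p<2^{1/a}$ that happen not to divide $n$ likewise decreases the product, so
\[
\prod_{\substack{p\mid n\\ p<2^{1/a}}} (p^a-1) \;\ge\; \prod_{p<2^{1/a}} (p^a-1),
\]
and the inequality follows. (The degenerate case $n=1$, where the product over $p\mid n$ is empty and equals $1$, is handled by the same observation since the target product is at most $1$.)

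The only non-routine step is the per-prime-power bound $|\sigma_c(p^\alpha)|\ge p^a-1$; everything else is just bookkeeping about which factors to keep and which to drop. I do not anticipate any real obstacle, since the two triangle inequalities used are tight enough for the argument and the resulting polynomial inequality collapses to $\alpha\ge 1$.
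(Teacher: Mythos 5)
Your proposal is correct and follows essentially the same route as the paper: the reverse and forward triangle inequalities on $\frac{p^{(\alpha+1)c}-1}{p^c-1}$ give $|\sigma_c(p^\alpha)|\ge p^a-1$, and multiplicativity plus the observation that factors with $p\ge 2^{1/a}$ are $\ge 1$ (while factors with $p<2^{1/a}$ are $<1$) yields the stated bound. The only cosmetic difference is that you explicitly verify $p^c\neq 1$ and treat $n=1$ separately, both of which the paper leaves implicit.
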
 
\begin{proof} 
Suppose $a>0$. For any prime $p$ and positive integer $\alpha$ we have \[\vert\sigma_c(p^\alpha)\vert=\left\lvert\frac{p^{(\alpha+1)c}-1}{p^c-1}\right\rvert\geq\frac{\vert p^{(\alpha+1)c}\vert-1}{\vert p^c\vert+1}=\frac{p^{(\alpha+1)a}-1}{p^a+1}\geq\frac{p^{2a}-1}{p^a+1}=p^a-1.\] Therefore, for any $n\in\mathbb N$, \[\vert\sigma_c(n)\vert=\left[\prod_{\substack{p\vert n \\ p<2^{1/a}}}\left\lvert\sigma_c\left(p^{\nu_p(n)}\right)\right\rvert\right]\left[\prod_{\substack{p\vert n \\ p\geq 2^{1/a}}}\left\lvert\sigma_c\left(p^{\nu_p(n)}\right)\right\rvert\right]\] \[\geq\left[\prod_{\substack{p\vert n \\ p<2^{1/a}}}(p^a-1)\right]\left[\prod_{\substack{p\vert n \\ p\geq 2^{1/a}}}(p^a-1)\right]\geq\prod_{\substack{p\vert n \\ p<2^{1/a}}}(p^a-1)\geq\prod_{p<2^{1/a}}(p^a-1).\]
\end{proof} 
In the third question that we posed above, we asked if we could find the values of $c$ for which $R(c)$ is dense in $\mathbb C$. Lemma \ref{Lem1.4} gives us an immediate partial answer to this question. If $a>0$, then $R(c)$ cannot be dense in $\mathbb C$ because there is a neighborhood of $0$ of radius $\displaystyle{\prod_{p<2^{1/a}}(p^a-1)}$ that contains no elements of $R(c)$. We will see in Theorem \ref{Thm2.2} that, in some sense, $R(c)$ is very far from being dense when $a>0$.  
\par 
The following lemma simply transforms an estimate due to Rosser and Shoenfeld into a slightly weaker inequality which is more easily applicable to our needs. 
\begin{lemma} \label{Lem1.5}
If $285\leq y<x$, then \[\prod_{p\in[y,x]}\left(1-\frac{1}{p}\right)<\frac{\log y}{\log x}+\frac{2}{\log^2y}.\] 
\end{lemma}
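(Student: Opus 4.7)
The plan is to invoke the classical Rosser--Schoenfeld estimates for $\prod_{p\leq t}(1-1/p)$ and then massage the resulting bound. Recall that Rosser and Schoenfeld proved that for $t \geq 285$,
\[\frac{e^{-\gamma}}{\log t}\left(1 - \frac{1}{2\log^2 t}\right) < \prod_{p\leq t}\left(1-\frac{1}{p}\right) < \frac{e^{-\gamma}}{\log t}\left(1 + \frac{1}{2\log^2 t}\right).\]
My strategy is to write the product over $[y,x]$ as the ratio
\[\prod_{p\in[y,x]}\left(1-\frac{1}{p}\right) = \frac{\prod_{p\leq x}(1-1/p)}{\prod_{p<y}(1-1/p)},\]
apply the upper Rosser--Schoenfeld bound to the numerator and the lower bound to the denominator, and then show that the resulting ratio is smaller than $\frac{\log y}{\log x} + \frac{2}{\log^2 y}$.

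For the denominator I would first observe that since $0 < 1 - 1/y < 1$, we have $\prod_{p<y}(1-1/p) \geq \prod_{p\leq y}(1-1/p)$, and since $y \geq 285$ I can apply the lower estimate to $\prod_{p\leq y}(1-1/p)$. The $e^{-\gamma}$ factors cancel, leaving
\[\prod_{p\in[y,x]}\left(1-\frac{1}{p}\right) < \frac{\log y}{\log x}\cdot\frac{1+\frac{1}{2\log^2 x}}{1-\frac{1}{2\log^2 y}}.\]

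The remaining work is purely algebraic. Setting $A = \frac{1}{2\log^2 x}$ and $B = \frac{1}{2\log^2 y}$, I would use the identity $\frac{1+A}{1-B} = 1 + \frac{A+B}{1-B}$ so that
\[\frac{\log y}{\log x}\cdot\frac{1+A}{1-B} = \frac{\log y}{\log x} + \frac{\log y}{\log x}\cdot\frac{A+B}{1-B}.\]
Since $y < x$ gives $A \leq B$ and hence $A + B \leq 2B = 1/\log^2 y$, while $y \geq 285$ makes $1 - B$ very close to $1$ (specifically $B < 1/(2\log^2 285) < 0.02$), the second term is at most $\frac{\log y}{\log x}\cdot\frac{1}{(1-B)\log^2 y} \leq \frac{1}{(1-B)\log^2 y}$ (using $\log y \leq \log x$). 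A direct numerical check that $\frac{1}{1-B} < 2$ for $y \geq 285$ then yields the desired bound $\frac{2}{\log^2 y}$.

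I do not foresee any serious obstacle; the only place requiring a small amount of care is verifying the constant: one must check that the constant from the denominator $1/(1-B)$ can be absorbed into the slack between $1$ and $2$ in the coefficient of $1/\log^2 y$. This amounts to the easy estimate $1/(1 - \tfrac{1}{2\log^2 285}) < 2$, which is comfortable.
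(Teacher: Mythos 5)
Your proposal is correct and follows essentially the same route as the paper: both start from Rosser--Schoenfeld's Theorem 7, bound $\prod_{p\in[y,x]}(1-1/p)$ by the ratio of the upper estimate at $x$ to the lower estimate at $y$ (with the $e^{-\gamma}$ factors cancelling), and then algebraically massage $\frac{\log y}{\log x}\cdot\frac{1+1/(2\log^2 x)}{1-1/(2\log^2 y)}$ into the target form. The only difference is cosmetic: the paper passes through the intermediate quantity $\frac{\log y}{\log x}\cdot\frac{\log^2 y}{\log^2 y-1}$ and then $\frac{2}{\log x\log y}$, whereas you write $\frac{1+A}{1-B}=1+\frac{A+B}{1-B}$ and bound $A+B\leq 2B$ and $1/(1-B)<2$ directly; both chains need only $\log^2 285>2$, which holds comfortably.
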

\begin{proof} 
Rosser and Shoenfeld's estimate \cite[Theorem 7]{Rosser62} states that if $x\geq 285$, then \[\frac{e^{-\gamma}}{\log x}\left(1-\frac{1}{2\log^2x}\right)<\prod_{p\leq x}\left(1-\frac{1}{p}\right)<\frac{e^{-\gamma}}{\log x}\left(1+\frac{1}{2\log^2x}\right),\] where $\gamma$ is the Euler-Mascheroni constant. Therefore, if $285\leq y<x$, then \[\prod_{p\in[y,x]}\left(1-\frac{1}{p}\right)<\left(\frac{e^{-\gamma}}{\log x}\left(1+\frac{1}{2\log^2x}\right)\right)\left(\frac{e^{-\gamma}}{\log y}\left(1-\frac{1}{2\log^2y}\right)\right)^{-1}\] \[=\frac{\log y}{\log x}\frac{1+1/(2\log^2x)}{1-1/(2\log^2y)}<\frac{\log y}{\log x}\frac{1+1/(2\log^2y)}{1-1/(2\log^2y)}<\frac{\log y}{\log x}\frac{\log^2y}{\log^2y-1}\] \[<\frac{\log y}{\log x}+\frac{2}{\log x\log y}<\frac{\log y}{\log x}+\frac{2}{\log^2y}.\] 
\end{proof} 
\begin{lemma} \label{Lem1.6}
Suppose $a\geq -1$ and $b>0$. Fix some $\beta\in\left(0,\displaystyle{\frac{\pi}{4}}\right)$. For each nonnegative integer $k$, let $\displaystyle{G_k=\left[e^{(2k\pi-\beta)/b},e^{(2k\pi+\beta)/b}\right]}$. Let $\displaystyle{\mathcal G=\bigcup_{k=0}^{\infty}G_k}$. If $x\in \mathcal G$, then $-\beta\leq \arg(x^c)\leq\beta$ and 
\[\vert 1+x^c\vert\geq\sqrt{1+2x^a\cos\beta+x^{2a}}.\] 
In addition, \[\sum_{p\in \mathcal{G}}\log\vert 1+p^c\vert=\infty.\] 
\end{lemma}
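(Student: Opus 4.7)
The plan is to verify parts (1) and (2) by direct polar-form computation, and then devote the main effort to the divergence claim. For part (1), write $x^c = x^a e^{ib\log x}$; since $x \in G_k$ forces $b\log x \in [2k\pi-\beta, 2k\pi+\beta]$ and $\beta < \pi/4 < \pi$, the principal argument of $x^c$ equals $b\log x - 2k\pi$, which lies in $[-\beta,\beta]$. For part (2), writing $x^c = x^a e^{i\theta}$ with $|\theta|\leq\beta$, the identity $|1+x^c|^2 = 1 + 2x^a\cos\theta + x^{2a}$ combined with the inequality $\cos\theta\geq\cos\beta$ gives the stated bound at once.

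For the divergence claim, I first observe that $\mathcal{G}$ contains infinitely many primes: the endpoint ratio $e^{2\beta/b}$ of $G_k$ exceeds $1$, so the consequence of $p_{n+1}/p_n \to 1$ invoked in the proof of Lemma \ref{Lem1.3} guarantees a prime in $G_k$ for every sufficiently large $k$. In the easy range $a\geq 0$, part (2) together with $p^a \geq 1$ yields $|1+p^c|^2 \geq 2(1+\cos\beta) = 4\cos^2(\beta/2)$; since $\beta<\pi/4$ forces $2\cos(\beta/2) > 1$, each summand $\log|1+p^c|$ is bounded below by the positive constant $\log(2\cos(\beta/2))$, and divergence follows immediately.

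The main obstacle is the range $-1 \leq a < 0$, in which $p^a \to 0$ and the individual summands tend to $0$, so one must quantify both the rate at which they shrink and the density of primes in $\mathcal{G}$. Here I discard the nonnegative term $p^{2a}$ to get $|1+p^c|^2 \geq 1 + 2p^a\cos\beta$, and apply the elementary inequality $\log(1+y) \geq y/2$ (valid for $0\leq y\leq 1$) to conclude
\[
\log|1+p^c| = \tfrac{1}{2}\log|1+p^c|^2 \geq \tfrac{1}{2}p^a\cos\beta
\]
for every prime $p$ large enough that $2p^a\cos\beta \leq 1$. The task then reduces to proving $\sum_{p\in\mathcal{G}} p^a = \infty$, which I handle block by block: by the Prime Number Theorem, the count of primes in $G_k$ is asymptotic to $\dfrac{b(e^{\beta/b}-e^{-\beta/b})}{2\pi k}\,e^{2k\pi/b}$, while each such prime satisfies $p^a \geq e^{a(2k\pi+\beta)/b}$ (since $a<0$). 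Multiplying, the contribution of $G_k$ to $\sum_{p\in\mathcal{G}} p^a$ is of order $k^{-1}e^{2k\pi(1+a)/b}$, which grows exponentially in $k$ when $a>-1$ and behaves like $C/k$ when $a=-1$. The sum over $k$ therefore diverges in both subcases, completing the argument.
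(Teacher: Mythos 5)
Your proof is correct, and the argument for the divergence claim is genuinely different from the paper's. The paper handles all $a\geq -1$ uniformly by comparing $|1+p^c|^2$ to $p/(p-1)$ and then invoking its Lemma \ref{Lem1.5} (an explicit Rosser--Shoenfeld version of Mertens' third theorem) to show $\prod_{p\in G_k} p/(p-1)$ is bounded below by roughly $1+\beta/(7k)$; summing the logarithms over $k$ gives divergence. You instead split into $a\geq 0$ (where a uniform positive lower bound on each summand plus the infinitude of primes in $\mathcal G$ settles it immediately) and $-1\leq a<0$, where you reduce via $\log(1+y)\geq y/2$ to showing $\sum_{p\in\mathcal G}p^a=\infty$, and then estimate this block by block using the Prime Number Theorem to count primes in $G_k\sim[y_k,\,e^{2\beta/b}y_k]$. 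Your count $\pi(x_k)-\pi(y_k)\sim \tfrac{b(e^{\beta/b}-e^{-\beta/b})}{2\pi k}e^{2k\pi/b}$ is a standard consequence of the PNT for intervals of fixed multiplicative width, and the resulting per-block contribution $\asymp k^{-1}e^{2k\pi(1+a)/b}$ correctly captures that $a=-1$ is the critical case, where the harmonic-type series still diverges. The paper's route has the organizational advantage that Lemma \ref{Lem1.5} is reused verbatim in the proof of Lemma \ref{Lem1.7}, and Mertens' theorem is a weaker input than the PNT; your route is arguably more transparent about \emph{why} divergence holds (prime density per block times decay rate of $p^a$), at the cost of invoking the heavier tool. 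Both are valid; neither has a gap.
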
 
\begin{proof} 
Suppose $x\in \mathcal{G}$. Then $x^c=x^ae^{ib\log x}$, so $\arg(x^c)=\arg(e^{ib\log x})$. By the definition of $\mathcal G$, $b\log x\in[2k\pi-\beta,2k\pi+\beta]$ for some nonnegative integer $k$. Therefore, $-\beta\leq\arg(e^{ib\log x})\leq\beta$. This implies that $\cos(b\log x)\geq\cos\beta$, so it follows from \eqref{EqEdit1} that \[\vert1+x^c\vert\geq\sqrt{1+2x^a\cos\beta+x^{2a}}.\]  
\par 
We now wish to prove that $\displaystyle{\sum_{p\in \mathcal G}\log\vert 1+p^c\vert=\infty}$. This sum makes sense (the order of the summands is immaterial) because all summands are positive by the preceding inequality. For sufficiently large $k$, we may use Lemma \ref{Lem1.5} to write \[\prod_{p\in G_k}\frac{p}{p-1}=\left(\prod_{p\in G_k}\left(1-\frac{1}{p}\right)\right)^{-1}>\left(\frac{2k\pi-\beta}{2k\pi+\beta}+\frac{2b^2}{(2k\pi-\beta)^2}\right)^{-1}\] \[=\left(1-\frac{2\beta}{2k\pi+\beta}+\frac{2b^2}{(2k\pi-\beta)^2}\right)^{-1}>\left(1-\frac{\beta}{2k\pi}+\frac{2b^2}{(2k\pi-\beta)^2}\right)^{-1}\] \[>\left(1-\frac{\beta}{7k}\right)^{-1}=1+\frac{\beta}{7k-\beta}.\] Also, if $p\geq 5$, then \[1+2p^a\cos\beta+p^{2a}>1+\frac{2\cos\beta}{p}\geq1+\frac{2\cos(\pi/4)}{p}>\frac{p}{p-1}.\]  Hence, for sufficiently large $k$, we have \[\sum_{p\in G_k}\log\vert 1+p^c\vert\geq\sum_{p\in G_k}\log\sqrt{1+2p^a\cos\beta+p^{2a}}>\sum_{p\in G_k}\log\sqrt{\frac{p}{p-1}}\] \[=\frac{1}{2}\log\prod_{p\in G_k}\frac{p}{p-1}>\frac{1}{2}\log\left(1+\frac{\beta}{7k-\beta}\right).\] The desired result then follows from the fact that \[\sum_{k=1}^\infty\frac{1}{2}\log\left(1+\frac{\beta}{7k-\beta}\right)=\infty.\] 
\end{proof}
\begin{lemma} \label{Lem1.7}
Suppose $-1\leq a<0$ and $b>0$. Fix some $\beta\in\left(0,\displaystyle{\frac{\pi}{4}}\right)$. For each nonnegative integer $k$, let $\displaystyle{H_k=\left[e^{((2k-1)\pi-\beta)/b},e^{((2k-1)\pi+\beta)/b}\right]}$. Let $\displaystyle{\mathcal H=\bigcup_{k=1}^{\infty}H_k}$. If $x\in \mathcal H$, then \[\vert 1+x^c\vert\leq\sqrt{1-2x^a\cos\beta+x^{2a}}\] and either $\arg(x^c)\leq-\pi+\beta$ or $\arg(x^c)\geq\pi-\beta$. 
In addition, \[\sum_{p\in \mathcal{H}}\log\vert 1+p^c\vert=-\infty.\] 
\end{lemma}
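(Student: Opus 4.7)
The plan is to mirror the argument of Lemma \ref{Lem1.6}, with the intervals $H_k$ now centered at odd (rather than even) multiples of $\pi/b$ so that $p^c$ clusters near $-1$ on the unit circle; consequently $\vert 1+p^c\vert$ is \emph{small}, and the sum will diverge to $-\infty$ rather than $+\infty$. For the geometric claims, write $x^c = x^a e^{ib\log x}$; by definition of $H_k$ we have $b\log x \in [(2k-1)\pi - \beta,\, (2k-1)\pi + \beta]$ for some $k \geq 1$, so $b\log x$ lies within $\beta$ of an odd multiple of $\pi$. Passing to the principal argument in $(-\pi,\pi]$ places $\arg(x^c) = \arg(e^{ib\log x})$ in $(-\pi,\,-\pi+\beta] \cup [\pi-\beta,\,\pi]$, which is the stated argument alternative. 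Since $\cos$ is at most $-\cos\beta$ on this set, the Law of Cosines identity $\vert 1+x^c\vert^2 = 1 + x^{2a} + 2x^a \cos(\arg(x^c))$ yields $\vert 1+x^c\vert \leq \sqrt{1 - 2x^a\cos\beta + x^{2a}}$; the radicand is positive because $\cos\beta < 1$ forces it to exceed $(1-x^a)^2 \geq 0$.

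For the divergence, I would first establish the pointwise estimate
\[1 - 2p^a\cos\beta + p^{2a} < 1 - \frac{1}{p}\]
for all sufficiently large primes $p$. Writing $u = p^a$, this is the quadratic inequality $u^2 - 2u\cos\beta + 1/p < 0$, whose roots are $\cos\beta \pm \sqrt{\cos^2\beta - 1/p}$. The upper bound holds with room to spare because $p^a \leq 1 < \sqrt{2} \leq 2\cos\beta$ (using $a<0$ and $\beta < \pi/4$). For the lower bound, rationalizing shows that the smaller root equals $(1/p)/(\cos\beta + \sqrt{\cos^2\beta - 1/p})$; since $a \geq -1$ gives $p^a \geq 1/p$, it suffices to verify $\cos\beta + \sqrt{\cos^2\beta - 1/p} > 1$, which again follows from $2\cos\beta > 1$ once $p$ is large.

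With these estimates in hand, for all sufficiently large $k$,
\[\sum_{p \in H_k} \log\vert 1 + p^c\vert \leq \frac{1}{2}\sum_{p \in H_k} \log\!\left(1 - \frac{1}{p}\right) = \frac{1}{2}\log \prod_{p \in H_k}\!\left(1 - \frac{1}{p}\right).\]
Applying Lemma \ref{Lem1.5} with $y_k = e^{((2k-1)\pi - \beta)/b}$ and $x_k = e^{((2k-1)\pi + \beta)/b}$ bounds the product above by $1 - \frac{2\beta}{(2k-1)\pi + \beta} + \frac{2b^2}{((2k-1)\pi - \beta)^2}$, which for large $k$ is at most $1 - C/k$ for some constant $C > 0$. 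Each block thus contributes at most $\frac{1}{2}\log(1 - C/k)$, and summing in $k$ diverges to $-\infty$. Since every summand $\log\vert 1 + p^c\vert$ is nonpositive on $\mathcal{H}$ (as $\vert 1 + p^c\vert \leq 1$ there), rearrangement back to a sum over $p \in \mathcal{H}$ is legitimate and yields $\sum_{p \in \mathcal{H}}\log\vert 1+p^c\vert = -\infty$.

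The main obstacle is the boundary case $a = -1$: the bound $p^{a+1} \geq 1$ used to clear the lower root becomes tight, and it is precisely here that the hypothesis $\beta < \pi/4$ (which makes $2\cos\beta > 1$) is indispensable. Apart from this sensitive balance, the proof is the sign-reversed analogue of that of Lemma \ref{Lem1.6}.
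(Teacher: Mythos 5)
Your proposal is correct and follows essentially the same path as the paper's proof: the geometric argument via the Law of Cosines, the pointwise estimate $1-2p^a\cos\beta+p^{2a}<1-\tfrac{1}{p}$ for large $p$ (which you derive via the quadratic's roots, while the paper factors through the intermediate bound $1-p^a$, using $p^a<2\cos\beta-1$ and then $p^a\geq 1/p$ from $a\geq-1$), the application of Lemma \ref{Lem1.5} to each block $H_k$, and the observation that all summands are of one sign so rearrangement is harmless. The two derivations of the pointwise estimate are equivalent in substance, so this is the same proof.
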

\begin{proof} 
The proof is quite similar to that of Lemma \ref{Lem1.6}. Suppose $x\in \mathcal{H}$. Then $x^c=x^ae^{ib\log x}$, so $\arg(x^c)=\arg(e^{ib\log x})$. By the definition of $\mathcal H$, \\ $b\log x\in[(2k-1)\pi-\beta,(2k-1)\pi+\beta]$ for some positive integer $k$. Therefore, either $\arg(x^c)\leq-\pi+\beta$ or $\arg(x^c)\geq\pi-\beta$. This implies that \\ $\cos(b\log x)\leq-\cos\beta$, so we see from \eqref{EqEdit1} that \[\vert1+x^c\vert\leq\sqrt{1-2x^a\cos\beta+x^{2a}}.\]  
\par 
We now show that \[\sum_{p\in \mathcal H}\log\vert 1+p^c\vert=-\infty.\] First, we need to check that the order of the summands in this summation does not matter. Because $a>0$ and $\beta\in\displaystyle{\left(0,\frac{\pi}{4}\right)}$, we have $p^a<1<2\cos\beta$ and, therefore, $-2p^a\cos\beta+p^{2a}<0$ for all $p\in\mathcal H$. Consequently, \[\log\vert 1+p^c\vert\leq\log\sqrt{1-2p^a\cos\beta+p^{2a}}<0\] for all $p\in\mathcal H$. In fact, if $p$ is sufficiently large, then \begin{equation}\label{EqEdit6} 1-2p^a\cos\beta+p^{2a}<1-p^a\leq 1-\frac{1}{p}.
\end{equation} As all summands are negative, their order does not matter. Using Lemma \ref{Lem1.5}, we see that if $k$ is sufficiently large, then \[\prod_{p\in H_k}\left(1-\frac{1}{p}\right)<\frac{(2k-1)\pi-\beta}{(2k-1)\pi+\beta}+\frac{2b^2}{((2k-1)\pi-\beta)^2}\] \[=1-\frac{2\beta}{(2k-1)\pi+\beta}+\frac{2b^2}{((2k-1)\pi-\beta)^2}<1-\frac{\beta}{4k}.\] Hence, for sufficiently large $k$, we may use \eqref{EqEdit6} to find that \[\sum_{p\in H_k}\log\vert 1+p^c\vert\leq\sum_{p\in H_k}\log\sqrt{1-2p^a\cos\beta+p^{2a}}<\sum_{p\in H_k}\log\sqrt{1-\frac{1}{p}}\] \[=\frac{1}{2}\log\prod_{p\in H_k}\left(1-\frac{1}{p}\right)<\frac{1}{2}\log\left(1-\frac{\beta}{4k}\right).\] We now obtain the desired result from the fact that \[\sum_{k=1}^\infty\frac{1}{2}\log\left(1-\frac{\beta}{4k}\right)=-\infty.\] 
\end{proof}   
\begin{lemma} \label{Lem1.8} 
Suppose $b>0$. For each nonnegative integer $k$, let \\ $J_k=\left[e^{\left(\left(2k+\frac{1}{4}\right)\pi\right)/b},e^{\left(\left(2k+\frac{3}{4}\right)\pi\right)/b}\right]$, and let $\mathcal J=\displaystyle{\bigcup_{k=0}^\infty J_k}$. If $x\in\mathcal J$, then \[\frac{\pi}{4}\leq\arg(x^c)\leq\frac{3\pi}{4}.\] Also, $\displaystyle{\sum_{p\in\mathcal J}\frac 1p=\infty}$.
\end{lemma}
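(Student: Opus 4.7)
My plan is to mirror the two-step structure of Lemmas \ref{Lem1.6} and \ref{Lem1.7}, with the Mertens-type asymptotic for $\sum_{p\leq x}1/p$ (Theorem 5 of \cite{Rosser62}) playing the role that the Mertens product (Lemma \ref{Lem1.5}) played there. The angular bound falls out of the definition of $J_k$ with essentially no work: if $x\in J_k$ for some $k\geq 0$, then $b\log x\in[(2k+\tfrac14)\pi,(2k+\tfrac34)\pi]$, so writing $x^c=x^ae^{ib\log x}$ shows that $\arg(x^c)=\arg(e^{ib\log x})$ equals the residue of $b\log x$ modulo $2\pi$. That residue lies in $[\tfrac{\pi}{4},\tfrac{3\pi}{4}]\subset(-\pi,\pi]$, so it already is the principal argument, giving the claimed range.

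For the divergence of $\sum_{p\in\mathcal J}1/p$, I would apply the asymptotic
\[\sum_{p\leq x}\frac{1}{p}=\log\log x+B+\epsilon(x),\qquad|\epsilon(x)|=O\!\left(\frac{1}{\log^2 x}\right),\]
(Theorem 5 of \cite{Rosser62}) at the two endpoints of $J_k$ and subtract. For all sufficiently large $k$ this yields
\[\sum_{p\in J_k}\frac{1}{p}=\log\!\left(\frac{(2k+3/4)\pi/b}{(2k+1/4)\pi/b}\right)+O\!\left(\frac{b^2}{k^2}\right)=\log\!\left(\frac{8k+3}{8k+1}\right)+O\!\left(\frac{b^2}{k^2}\right).\]
Since every summand of $\sum_{p\in\mathcal J}1/p$ is positive the order of summation is immaterial, so $\sum_{p\in\mathcal J}\tfrac{1}{p}=\sum_{k\geq 0}\sum_{p\in J_k}\tfrac1p$, and the problem reduces to showing that $\sum_k\log(\tfrac{8k+3}{8k+1})=\infty$. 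Because $\log(\tfrac{8k+3}{8k+1})\sim\tfrac{1}{4k}$, this is an immediate consequence of the divergence of the harmonic series, while the tail $\sum_k O(b^2/k^2)$ of Rosser--Shoenfeld errors is finite.

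The only real obstacle is the standard bookkeeping needed to separate the divergent main term from the convergent error, but it is essentially the same bookkeeping already done at the ends of Lemmas \ref{Lem1.6} and \ref{Lem1.7}, only now a direct asymptotic for $\sum_{p\leq x}1/p$ is available in place of a product bound. This is presumably what the author has in mind in remarking that the proof is easier than those of the preceding two lemmas.
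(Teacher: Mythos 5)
Your proposal is correct and follows precisely the route the paper indicates (the paper omits the proof but directs the reader to mimic Lemmas~\ref{Lem1.6} and~\ref{Lem1.7} using Theorem~5 of Rosser--Shoenfeld in place of Theorem~7). The angular bound falls out of the definition of $J_k$, and the telescoping application of the $\sum_{p\leq x}1/p=\log\log x+B+O(1/\log^2x)$ asymptotic at the two endpoints of each $J_k$ gives a main term $\log\bigl(\tfrac{8k+3}{8k+1}\bigr)\sim\tfrac{1}{4k}$ with a summable $O(b^2/k^2)$ error, exactly as needed.
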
 
\begin{proof} 
If $x\in\mathcal J$, then $b\log x\in\left[\left(2k+\dfrac 14\right)\pi,\left(2k+\dfrac 34\right)\pi\right]$ for some nonnegative integer $k$, so $\dfrac{\pi}{4}\leq\arg(x^c)\leq\dfrac{3\pi}{4}$. For $286\leq u<v$, we may use Theorem 5 in \cite{Rosser62} to write \[\sum_{p\in[u,v]}\frac 1p\geq\sum_{p\leq v}\frac 1p-\sum_{p\leq u}\frac 1p\] \[>(\log\log v+B-1/(2\log^2v))-(\log\log u+B+1/(2\log^2u))\] \[=\log\left(\frac{\log v}{\log u}\right)-\frac{1}{2\log^2u}-\frac{1}{2\log^2v},\] where $B$ is a real constant. Therefore, for sufficiently large $k$, we have \[\sum_{p\in J_k}\frac 1p>\log\left(\frac{(2k+\frac 34)\pi/b}{(2k+\frac 14)\pi/b}\right)-\frac{b^2}{2\pi^2(2k+\frac 14)^2}-\frac{b^2}{2\pi^2(2k+\frac 34)^2}\] \[=\log\left(1+\frac{2}{8k+1}\right)-\left[\frac{b^2}{2\pi^2(2k+\frac 14)^2}+\frac{b^2}{2\pi^2(2k+\frac 34)^2}\right].\] Because \[\sum_{k=0}^\infty\left(\frac{b^2}{2\pi^2(2k+\frac 14)^2}+\frac{b^2}{2\pi^2(2k+\frac 34)^2}\right)\] converges and \[\sum_{k=0}^\infty\log\left(1+\frac{2}{8k+1}\right)=\infty,\] it follows that \[\sum_{p\in\mathcal J}\frac 1p=\sum_{k=0}^\infty\sum_{p\in J_k}\frac 1p=\infty.\] 
\end{proof}  
\begin{lemma}\label{Lem1.9} 
Let $\beta\in\displaystyle{\left(0,\frac{\pi}{4}\right)}$, and let $x>0$ be a real number such that $x^a<\sqrt2-1$. If $\arg(x^c)>\pi-\beta$ or $\arg(x^c)<-\pi+\beta$, then \[\vert\arg(1+x^c)\vert<\frac{\sin\beta}{2-\sqrt2}x^a.\]
\end{lemma} 
\begin{proof} 
Let $f(t)=\dfrac{x^a\sin t}{1-x^a\cos t}$, and let \[\varphi=\begin{cases} \pi-\arg(x^c), & \mbox{if } \arg(x^c)>\pi-\beta; \\ \pi+\arg(x^c), & \mbox{if } \arg(x^c)<-\pi+\beta. \end{cases}\] Note that $0<\varphi<\beta$, $\sin\varphi=\vert\sin(b\log x)\vert$, and $\cos\varphi=-\cos(b\log x)$. Also, for all $t\in\displaystyle{\left(0,\frac{\pi}{4}\right)}$, we have \[f'(t)=\frac{x^a(\cos t-x^a)}{(1-x^a\cos t)^2}>\frac{x^a(\cos(\pi/4)-x^a)}{(1-x^a\cos t)^2}>0\] because $x^a<\sqrt2-1<\cos(\pi/4)$. This implies that $f(\varphi)<f(\beta)$. Using \eqref{EqEdit2} along with the inequality $\vert\tan^{-1}\theta\vert\leq\vert\theta\vert$, which is valid for all real $\theta$, we find that \[\vert\arg(1+x^c)\vert=\left\lvert\tan^{-1}\left(\frac{x^a\sin(b\log x)}{1+x^a\cos(b\log x)}\right)\right\rvert\leq\left\lvert\frac{x^a\sin(b\log x)}{1+x^a\cos(b\log x)}\right\rvert\] \[=\frac{x^a\vert\sin(b\log x)\vert}{1+x^a\cos(b\log x)}=\frac{x^a\sin\varphi}{1-x^a\cos\varphi}=f(\varphi)<f(\beta)\] \[=\frac{x^a\sin \beta}{1-x^a\cos \beta}<\frac{x^a\sin \beta}{1-(\sqrt2-1)}=\frac{\sin \beta}{2-\sqrt2}x^a.\] 
\end{proof} 
\section{Boundedness and Isolated Points}
It turns out that questions $1$ and $2$ posed in the introduction are not too difficult to handle, so we will give complete answers to them in this section. 
\begin{theorem} \label{Thm2.1}
The set $R(c)$ is bounded if and only if $a<-1$.
\end{theorem}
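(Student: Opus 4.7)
The plan is to prove both implications separately. The sufficient direction ($a<-1\Rightarrow R(c)$ bounded) is immediate: the triangle inequality gives
\[
\vert\sigma_c(n)\vert\leq\sum_{d\vert n}\vert d^c\vert=\sum_{d\vert n}d^a\leq\sum_{d=1}^{\infty}d^a=\zeta(-a),
\]
and $\zeta(-a)$ is finite because $-a>1$. So $R(c)\subseteq\{z\in\mathbb C:\vert z\vert\leq\zeta(-a)\}$.

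For the converse, assume $a\geq -1$; the task is to produce $n\in\mathbb N$ with $\vert\sigma_c(n)\vert$ arbitrarily large. Lemma~\ref{Lem1.1} allows us to assume $b\geq 0$, since $R(c)$ and $R(\overline c)$ are mirror images across the real axis. I would then split into three cases according to the location of $c$.

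In the case $a>0$, unboundedness already shows up on primes: $\vert\sigma_c(p)\vert=\vert 1+p^c\vert\geq p^a-1\to\infty$. In the case $-1\leq a\leq 0$ with $b=0$, the exponent $c=a$ is real. For $c=0$ one has $\sigma_0(2^k)=k+1\to\infty$, while for $-1\leq c<0$ we use $n_k=p_1\cdots p_k$; since $-c\leq 1$ we have $p^c\geq 1/p$, and Mertens' divergence $\sum_p 1/p=\infty$ combined with $\log(1+x)\geq x/2$ for small positive $x$ gives $\log\sigma_c(n_k)=\sum_{i=1}^k\log(1+p_i^c)\to\infty$. In the remaining case $a\geq -1$ and $b>0$, I would fix any $\beta\in(0,\pi/4)$ and apply Lemma~\ref{Lem1.6} to the set $\mathcal G$. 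Since each summand $\log\vert 1+p^c\vert$ with $p\in\mathcal G$ is strictly positive and $\sum_{p\in\mathcal G}\log\vert 1+p^c\vert=\infty$, one can choose thresholds $X_k\to\infty$ so that the squarefree integers $n_k=\prod_{p\in\mathcal G,\,p\leq X_k}p$ satisfy, by multiplicativity of $\sigma_c$,
\[
\log\vert\sigma_c(n_k)\vert=\sum_{p\in\mathcal G,\,p\leq X_k}\log\vert 1+p^c\vert\longrightarrow\infty.
\]

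The main obstacle is the strip $-1\leq a\leq 0$ with $b>0$: single primes no longer help, since $\vert 1+p^c\vert\leq 1+p^a\leq 2$, and naive aggregation over all primes cannot work either, because the arguments $\arg(p^c)=b\log p$ wrap around the circle and individual factors can have modulus less than $1$. Lemma~\ref{Lem1.6} is precisely the tool tailored to this obstruction: by restricting to primes lying in the geometric windows $G_k$, it keeps $\arg(p^c)$ pinned inside $[-\beta,\beta]$, forcing a uniformly positive contribution to $\log\vert 1+p^c\vert$ whose total diverges. Once Lemma~\ref{Lem1.6} is in hand, the construction above converts this divergence into unboundedness of $\vert\sigma_c(n_k)\vert$ and completes the proof.
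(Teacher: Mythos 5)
Your proposal is correct and follows essentially the same route as the paper: the $\zeta(-a)$ bound for $a<-1$, reduction to $b\geq 0$ via Lemma \ref{Lem1.1}, and for the unbounded direction the product of primes (restricted to $\mathcal G$ from Lemma \ref{Lem1.6} when $b>0$) whose factors $\vert 1+p^c\vert$ have divergent log-sum. Your extra case split for $a>0$ and the Mertens detail for real $-1\leq c<0$ are harmless refinements of the same argument.
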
 
\begin{proof} 
If $a<-1$, then $R(c)$ is bounded because \[\vert\sigma_c(n)\vert=\left\lvert\sum_{d\vert n}d^c\right\rvert\leq\sum_{d\vert n}\vert d^c\vert=\sum_{d\vert n}d^a<\sum_{k=1}^\infty k^a=\zeta(-a)\] for all $n\in\mathbb{N}$. Now, suppose $a\geq -1$. By Lemma \ref{Lem1.1}, we see that it suffices to prove the result for $b\geq 0$. If $b=0$, then \[\lim_{x\rightarrow\infty}\sigma_c\left(\prod_{p\leq x}p\right)=\lim_{x\rightarrow\infty}\prod_{p\leq x}\sigma_c(p)=\lim_{x\rightarrow\infty}\prod_{p\leq x}\left(1+p^a\right)=\infty,\] so $R(c)$ cannot be bounded. If $b>0$, then the proof follows from Lemma \ref{Lem1.6} because \[\lim_{x\rightarrow\infty}\left\lvert\sigma_c\left(\prod_{\substack{p\in\mathcal G \\ p\leq x}}p\right)\right\rvert=\lim_{x\rightarrow\infty}\prod_{\substack{p\in\mathcal G \\ p\leq x}}\vert\sigma_c(p)\vert=\lim_{x\rightarrow\infty}\prod_{\substack{p\in\mathcal G \\ p\leq x}}\vert1+p^c\vert=\infty,\] where $\mathcal G$ is defined as in the lemma. Note that we have used the fact that $\sigma_c$ is multiplicative. 
\end{proof}   
\begin{theorem} \label{Thm2.2}
If $a<0$, then $R(c)$ has no isolated points. If $a=0$ and $c\neq 0$, then $R(c)$ is dense in $\mathbb C$ (and, therefore, has no isolated points). If $a>0$ or $c=0$, then every point of $R(c)$ is an isolated point of $R(c)$.  
\end{theorem} 
\begin{proof} 
First, suppose $a<0$, and let $z_0\in R(c)$. Note that $z_0\neq 0$ by Lemma \ref{Lem1.2}. We may write $z_0=\sigma_c(n)$ for some $n\in\mathbb N$. Choose some $\epsilon>0$. To show that $z_0$ is not an isolated point of $R(c)$, we simply need to exhibit a positive integer $N$ such that $0<\vert z_0-\sigma_c(N)\vert<\epsilon$. As $a<0$, we may choose some prime $q>n$ such that $q^a<\displaystyle{\frac{\epsilon}{\vert z_0\vert}}$. Let $N=qn$. As $q$ is relatively prime to $n$, we have \[\vert z_0-\sigma_c(N)\vert=\vert z_0-\sigma_c(q)\sigma_c(n)\vert=\vert z_0\vert\cdot\vert 1-\sigma_c(q)\vert\] \[=\vert z_0\vert\cdot\vert 1-(1+q^c)\vert=q^a\vert z_0\vert<\epsilon.\] This also shows that $\vert z_0-\sigma_c(N)\vert\neq 0$ because $q^a\neq 0$ and $z_0\neq 0$. 
\par 
We now handle the case $a=0$, $c\neq 0$. In this case, $c=bi\neq 0$. We wish to show that $R(c)$ is dense in $\mathbb C$. By Lemma \ref{Lem1.1}, we see that it suffices to prove this claim when $b>0$. Fix some $r>0$ and some $\theta\in(-\pi,\pi]$. Choose $\epsilon\in(0,1/5)$. We wish to exhibit a positive integer $N$ such that $\displaystyle{\vert\sigma_c(N)\vert\in\left(r(1-\epsilon)^2,r(1+\epsilon)^2\right)}$ and $\vert\arg(\sigma_c(N))-\theta+2t\pi\vert<(r+9)\epsilon$ for some integer $t$. This will show that $re^{i\theta}$ is either in $R(c)$ or is a limit point of $R(c)$. Because we may choose $re^{i\theta}$ to be any nonzero complex number, this will prove the assertion that $R(c)$ is dense in $\mathbb C$. By Lemma \ref{Lem1.3}, it is possible to find distinct primes $q_1,q_2,q_3$ such that 
\begin{equation} \label{EqEdit15}
\vert\arg(\sigma_c(q_\ell))-\theta/3\vert<\epsilon
\end{equation} for each $\ell\in\{1,2,3\}$. Note that $\displaystyle{-\frac{\pi}{3}<\frac{\theta}{3}\leq\frac{\pi}{3}}$, so \[\vert\arg(\sigma_c(q_\ell))\vert<\displaystyle{\frac{\pi}{3}+\epsilon<\frac{\pi}{3}+\frac 15}\] for each $\ell\in\{1,2,3\}$. For each $\ell\in\{1,2,3\}$, we know that $\sigma_c(q_\ell)$ lies on the circle $\{1+z\in\mathbb C\colon\vert z\vert=1\}$ and that $\vert\arg(\sigma_c(q_\ell))\vert<\displaystyle{\frac{\pi}{3}+\frac 15}$, so it is easy to verify that $\vert\sigma_c(q_\ell)\vert\geq \displaystyle{\frac 35}$. Therefore, one may easily show that \[\log_{1.9}\left(\frac{r}{\vert \sigma_c(q_1)\sigma_c(q_2)\sigma_c(q_3)\vert}\right)<r+3.\] Let $h$ be a positive integer such that \[\log_{1.9}\left(\frac{r}{\vert \sigma_c(q_1)\sigma_c(q_2)\sigma_c(q_3)\vert}\right)<h<r+4.\] If $p$ is any prime such that $\vert\arg(p^c)\vert<\epsilon$, then it is easy to 
see from \eqref{EqEdit1} that 
\begin{equation} \label{EqEdit16}
\vert\sigma_c(p)\vert=\vert 1+p^c\vert=\sqrt{2+2\cos(\arg(p^c))}>\sqrt{2+2\cos(1/5)}>1.9
\end{equation} because $\vert \arg(p^c)\vert<\epsilon<\displaystyle{\frac 15}$.  Lemma \ref{Lem1.3} tells us that it is possible to choose distinct primes $P_1,P_2,\ldots,P_h$ such that 
\begin{equation} \label{EqEdit21}
\vert\arg(P_j^c)\vert<\epsilon
\end{equation} and $P_j\not\in\{q_1,q_2,q_3\}$ for all $j\in\{1,2,\ldots,h\}$. It follows from \eqref{EqEdit16} that \[\prod_{j=1}^h\vert\sigma_c(P_j)\vert>1.9^h.\] Let $P=q_1q_2q_3P_1P_2\cdots P_h$, and let $Q=\displaystyle{\frac{r}{\vert\sigma_c(P)\vert}}$. Using the fact that \[\log_{1.9}\left(\frac{r}{\vert \sigma_c(q_1)\sigma_c(q_2)\sigma_c(q_3)\vert}\right)<h,\] we have \[\vert\sigma_c(P)\vert=\vert\sigma_c(q_1)\sigma_c(q_2)\sigma_c(q_3)\vert\prod_{j=1}^h\vert\sigma_c(P_j)\vert>1.9^h\vert\sigma_c(q_1)\sigma_c(q_2)\sigma_c(q_3)\vert>r.\] Therefore, $0<Q<1$. This implies that there exists a complex number $z_0$ such that $\Im(z_0)>0$, $\vert z_0\vert=\sqrt Q$, and $\vert z_0-1\vert=1$. By Lemma \ref{Lem1.3}, it is possible to choose distinct primes $q_4$ and $q_5$ such that $q_4\nmid P$, $q_5\nmid P$, 
\begin{equation} \label{EqEdit17}
\vert\arg(\sigma_c(q_4))-\arg(z_0)\vert<\epsilon,
\end{equation} 
\begin{equation} \label{EqEdit18}
\vert\arg(\sigma_c(q_5))+\arg(z_0)\vert<\epsilon,
\end{equation}
\begin{equation} \label{EqEdit19}
\sqrt Q(1-\epsilon)<\vert\sigma_c(q_4)\vert<\sqrt Q(1+\epsilon),
\end{equation} and 
\begin{equation} \label{EqEdit20}
\sqrt Q(1-\epsilon)<\vert\sigma_c(q_5)\vert<\sqrt Q(1+\epsilon).
\end{equation} Essentially, the inequalities \eqref{EqEdit17}, \eqref{EqEdit18}, \eqref{EqEdit19}, and \eqref{EqEdit20} serve to ensure that we have chosen $q_4$ and $q_5$ so that $\sigma_c(q_4)$ is sufficiently close to $z_0$ and $\sigma_c(q_5)$ is sufficiently close to $\overline{z_0}$. If we let $N=q_4q_5P$, then \[\vert\sigma_c(N)\vert=\vert\sigma_c(q_4)\vert\vert\sigma_c(q_5)\vert\vert\sigma_c(P)\vert\] \[=\vert\sigma_c(q_4)\vert\vert\sigma_c(q_5)\vert\frac{r}{Q}\in\left(r(1-\epsilon)^2,r(1+\epsilon)^2\right),\] where we have used \eqref{EqEdit19} and \eqref{EqEdit20}. Also, there exists some integer $t$ such that \[\vert\arg(\sigma_c(N))-\theta+2t\pi\vert=\left\lvert\sum_{\ell=1}^5\arg(\sigma_c(q_\ell))+\sum_{j=1}^h\arg(\sigma_c(P_j))-\theta\right\rvert\] \[\leq\left(\sum_{\ell=1}^3\vert\arg(\sigma_c(q_\ell))-\theta/3\vert\right)+\vert\arg(\sigma_c(q_4))-\arg(z_0)\vert\] \[+\vert\arg(\sigma_c(q_5))+\arg(z_0)\vert+\sum_{j=1}^h\vert\arg(\sigma_c(P_j))\vert.\] We know from \eqref{EqEdit15}, \eqref{EqEdit17}, and \eqref{EqEdit18} that \[\sum_{\ell=1}^3\vert\arg(\sigma_c(q_\ell))-\theta/3\vert<3\epsilon\] and \[\vert\arg(\sigma_c(q_4))-\arg(z_0)\vert+\vert\arg(\sigma_c(q_5))+\arg(z_0)\vert<2\epsilon.\] Similarly, we know from \eqref{EqEdit21} and the fact that $h<r+4$ that \[\sum_{j=1}^h\vert\arg(\sigma_c(P_j))\vert<\sum_{j=1}^h\epsilon<(r+4)\epsilon.\] Hence, \[\vert\arg(\sigma_c(N))-\theta+2t\pi\vert<(r+9)\epsilon.\] This completes the proof of the fact that $R(c)$ is dense in $\mathbb C$ when $a=0$ and $b\neq 0$. 
\par 
Now, assume $a>0$. Choose some $D>0$, and let $m$ be a positive integer such that $\vert \sigma_c(m)\vert<D$. Suppose $p_0^{\alpha_0}\vert m$ for some prime $p_0$ and some positive integer $\alpha_0$. Let us write $m=p_0^\xi y$, where $\xi$ and $y$ are integers, $\xi\geq\alpha_0$ and $p_0\nmid y$. Note that \[\left\lvert\sigma_c\left(p_0^\xi\right)\right\rvert=\left\lvert\frac{p_0^{(\xi+1)c}-1}{p_0^c-1}
\right\rvert\geq\frac{\left\lvert p_0^{(\xi+1)c}\right\rvert-1}{\vert p_0^c\vert+1}=\frac{p_0^{(\xi+1)a}-1}{p_0^a+1}\geq\frac{p_0^{(\alpha_0+1)a}-1}{p_0^a+1}\] and $\vert\sigma_c(y)\vert\geq\displaystyle{\prod_{p<2^{1/a}}(p^a-1)}$ by Lemma \ref{Lem1.4}. For the sake of brevity, 
let $M=\displaystyle{\prod_{p<2^{1/a}}(p^a-1)}$. Then \[D>\vert\sigma_c(m)\vert\geq M\frac{p_0^{(\alpha_0+1)a}-1}{p_0^a+1}.\] If we fix $p_0$, we see that $\alpha_0$ must be 
bounded above. Similarly, if we fix $\alpha_0$, we see that $p_0$ must be bounded above. Consequently, there are only finitely many prime powers $p_0^{\alpha_0}$ that can divide $m$. This implies that there are only finitely many positive integers $m$ such that $\vert\sigma_c(m)\vert<D$. It follows that any disk in the complex plane contains finitely many points of $R(c)$, so every point of $R(c)$ is an isolated point of $R(c)$.     
\par 
The final case we have to consider is when $c=0$. This is easy because $\sigma_0(n)$ is simply the number of divisors of $n$. We see that $R(0)=\mathbb{N}$, so every point in $R(0)$ is an isolated point of $R(0)$. 
\end{proof} 
\section{Closures}
The third question posed in the introduction proves to be more difficult than the first two. For one thing, the first part of the question is fairly open-ended. What exactly would we like to know about the sets $\overline{R(c)}$? In order to ask more specific and interesting questions, we wish to gain a bit of basic information. First of all, if $a>0$ or if $c=0$, then there is not much use in inquiring about the set $\overline{R(c)}$ because this set is the same as $R(c)$ by Theorem \ref{Thm2.2}. If $b=0$ (so that $c$ is real), then $R(c)$ is a subset of $\mathbb R$. In that case, there are many fascinating questions we may ask. For example, as mentioned in the introduction, the author has classified those real $c$ for which $\overline{R(c)}$ is a single closed interval \cite{Defant14}. Here, however, we will not pay too close attention to the sets $\overline{R(c)}$ for real $c$. 
\par 
For the moment, let us streamline our attention toward the sets $\overline{R(c)}$ when $-1\leq a\leq 0$ and $b\neq 0$. By Theorem \ref{Thm2.1}, these sets are not bounded. At the same time, Theorem \ref{Thm2.2} tells us that $R(c)$ has no isolated points for such values of $c$, so we might hope to obtain more interesting sets than those that arise when $a>0$. In fact, if we recall the second part of the third question posed in the introduction and decide to embark on a quest to find those complex $c$ for which $\overline{R(c)}=\mathbb C$, then we need only consider the case $-1\leq a\leq 0$, $b\neq 0$. That is, Theorems \ref{Thm2.1} and \ref{Thm2.2} imply that if $\overline{R(c)}=\mathbb C$, then \begin{equation}\label{EqEdit3} 
-1\leq a\leq 0\hspace{.5cm}\text{and}\hspace{.5cm}b\neq 0. 
\end{equation} 
\par 
To begin this quest, we define a set $\Theta(c)\subseteq\mathbb R$ for each complex $c$ by $\Theta(c)=\{\arg(\sigma_c(n))\colon n\in\mathbb N, \sigma_c(n)\neq 0\}$. The set $\Theta(c)$ is simply the set of arguments of nonzero points in $R(c)$. Clearly, if $\overline{R(c)}=\mathbb C$, then $\overline{\Theta(c)}=[-\pi,\pi]$. In fact, the truth of the converse of this assertion (in the case $-1\leq a\leq 0$) will allow us to deduce our main result. First, we need the following lemma. 
\begin{lemma} \label{Lem3.1} 
If $-1\leq a<0$ and $b\neq 0$, then $\overline{\Theta(c)}=[-\pi,\pi]$. 
\end{lemma}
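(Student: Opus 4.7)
The plan is to reduce to the case $b>0$ via Lemma \ref{Lem1.1}, and then, given a target $\theta\in(-\pi,\pi]$ and tolerance $\epsilon>0$, to construct a squarefree $n$ built from primes in the set $\mathcal J$ of Lemma \ref{Lem1.8} so that $\sigma_c(n)\neq 0$ and $\arg(\sigma_c(n))$ lies within $\epsilon$ of $\theta$ modulo $2\pi$. The guiding idea is that multiplying in one additional prime from $\mathcal J$ nudges the cumulative argument by a small positive amount, and that these nudges sum to $+\infty$; an intermediate-value argument then shows the partial sums sweep through every residue class modulo $2\pi$.

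For $p\in\mathcal J$, Lemma \ref{Lem1.8} gives $\arg(p^c)\in[\pi/4,3\pi/4]$, so $\Im(p^c)=p^a\sin(\arg(p^c))\geq p^a/\sqrt 2$. Since $a<0$ forces $p^a\to 0$, for all sufficiently large $p\in\mathcal J$ the point $\sigma_c(p)=1+p^c$ lies in the open right half-plane with strictly positive imaginary part; hence $\arg(\sigma_c(p))$ is positive, tends to $0$, and satisfies $\arg(\sigma_c(p))\geq C p^a$ for some constant $C=C(a)>0$. Because $a\geq -1$ we have $p^a\geq 1/p$, so the divergence $\sum_{p\in\mathcal J}1/p=\infty$ from Lemma \ref{Lem1.8} yields $\sum_{p\in\mathcal J}\arg(\sigma_c(p))=\infty$.

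Enumerating the primes of $\mathcal J$ above some threshold as $q_1<q_2<\cdots$ and setting $S_N=\sum_{i=1}^N\arg(\sigma_c(q_i))$, I obtain a divergent positive series whose terms tend to zero. For every sufficiently large integer $k$, there is some $N_k$ with $S_{N_k}\leq \theta+2k\pi<S_{N_k+1}$, and the overshoot $S_{N_k+1}-(\theta+2k\pi)$ is bounded by $\arg(\sigma_c(q_{N_k+1}))$, which is smaller than $\epsilon$ once $k$ is large enough. Taking $n=q_1\cdots q_{N_k+1}$, multiplicativity gives $\sigma_c(n)=\prod_{i=1}^{N_k+1}\sigma_c(q_i)\neq 0$ and $\arg(\sigma_c(n))\equiv S_{N_k+1}\pmod{2\pi}$, placing $\arg(\sigma_c(n))$ within $\epsilon$ of $\theta$ modulo $2\pi$. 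The most delicate step will be the lower bound $\arg(\sigma_c(p))\geq Cp^a$ on $\mathcal J$: this is where the angular pinch from Lemma \ref{Lem1.8} and the size hypothesis $a\geq -1$ must be combined to promote prime-reciprocal divergence into divergence of the argument sum.
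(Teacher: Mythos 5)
Your proposal is correct and follows essentially the same route as the paper's proof: both use the primes of $\mathcal J$ from Lemma \ref{Lem1.8}, the lower bound $\arg(\sigma_c(p))\gtrsim p^a\geq 1/p$ on that set, the divergence $\sum_{p\in\mathcal J}1/p=\infty$, and an intermediate-value argument on the slowly-growing partial sums of arguments. The paper makes the constant in the ``delicate step'' explicit via $\arg(1+s_k^c)\geq\tan^{-1}\!\left(\frac{s_k^a/\sqrt2}{1+s_k^a/\sqrt2}\right)>\frac{\sqrt2}{(\sqrt2+1)^2}\cdot\frac{1}{s_k}$ using $\tan^{-1}x\geq x(1-x)$, which is exactly the geometry you flag as needing to be filled in.
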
   
\begin{proof} 
Choose some $\theta,\epsilon>0$. We will find a positive integer $n$ such that $\theta-\epsilon<\arg(\sigma_c(n))+2t\pi<\theta+\epsilon$ for some integer $t$, and this will prove the claim. Consider the sets $J_k$ and $\mathcal J$ that were defined in Lemma \ref{Lem1.8}, and let $\mathcal J\cap\mathbb P=\{s_1,s_2,\ldots\}$, where $s_1<s_2<\cdots$. Pick some positive integer $k$. Note that $\vert s_k^c\vert=s_k^a$. Because Lemma \ref{Lem1.8} tells us that $\displaystyle{\frac{\pi}{4}}\leq \arg(s_k^c)\leq\displaystyle{\frac{3\pi}{4}}$, it is not difficult to see from \eqref{EqEdit2} that \[\arg(\sigma_c(s_k))=\arg(1+s_k^c)\geq\tan^{-1}\left(\frac{s_k^a/\sqrt2}{1+s_k^a/\sqrt2}\right).\] Using the fact that $\displaystyle{\frac{1}{s_k}\leq s_k^a<1}$ along with the inequality $\tan^{-1}x\geq x(1-x)$, which holds for all real $x$, we have \[\tan^{-1}\left(\frac{s_k^a/\sqrt2}{1+s_k^a/\sqrt2}\right)\geq \frac{s_k^a/\sqrt2}{1+s_k^a/\sqrt2}\left(1-\frac{s_k^a/\sqrt2}{1+s_k^a/\sqrt2}\right)\] \[=\frac{s_k^a\sqrt2}{(\sqrt2+s_k^a)^2}>\frac{\sqrt2}{(\sqrt2+1)^2}\frac{1}{s_k}.\] It then follows from Lemma \ref{Lem1.8} that $\displaystyle{\sum_{j=1}^\infty\arg(\sigma_c(s_j))=\infty}$. Because $a<0$, $\displaystyle{\lim_{j\rightarrow\infty}\arg(\sigma_c(s_j))=\lim_{j\rightarrow\infty}\arg(1+s_j^a)=0}$. Let $K$ be a positive integer such that $\arg(\sigma_c(s_j))<\epsilon$ for all $j\geq K$. As $\displaystyle{\sum_{j=K}^\infty\arg(\sigma_c(s_j))=\infty}$, there exists some integer $M\geq K$ such that $\theta-\epsilon<\displaystyle{\sum_{j=K}^M\arg(\sigma_c(s_j))<\theta+\epsilon}$. Setting $n=\displaystyle{\prod_{j=K}^Ms_j}$, we have $\displaystyle{\arg(\sigma_c(n))=\sum_{j=K}^M\arg(\sigma_c(s_j))-2t\pi}$ for some integer $t$, from which we obtain the desired inequalities $\theta-\epsilon<\arg(\sigma_c(n))+2t\pi<\theta+\epsilon$.  
\end{proof} 
\begin{theorem} \label{Thm3.1} 
The set $R(c)$ is dense in $\mathbb C$ if and only if $-1\leq a\leq 0$ and $b\neq 0$.   
\end{theorem}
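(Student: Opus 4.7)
The plan is to handle necessity quickly using the lemmas already proved, then address sufficiency, which reduces to the case $-1 \leq a < 0$ with $b > 0$.

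\textbf{Necessity.} If $a > 0$, Lemma \ref{Lem1.4} gives an open disk about $0$ that misses $R(c)$. If $a < -1$, $R(c)$ is bounded by Theorem \ref{Thm2.1}. If $b = 0$, then $c$ is real and $R(c) \subseteq \mathbb R$. Each alternative rules out density, so any $c$ with $R(c)$ dense in $\mathbb C$ must satisfy $-1 \leq a \leq 0$ and $b \neq 0$.

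\textbf{Sufficiency.} The case $a = 0$, $b \neq 0$ was already handled inside the proof of Theorem \ref{Thm2.2}, so I focus on $-1 \leq a < 0$ and $b \neq 0$; by Lemma \ref{Lem1.1} I may assume $b > 0$. Fix a nonzero target $z = re^{i\theta}$ with $r > 0$ and a tolerance $\epsilon > 0$. I will build $N \in \mathbb N$ with $|\sigma_c(N) - z| < \epsilon$ in two stages: first set the argument, then slide the magnitude into place.

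For stage one, Lemma \ref{Lem3.1} produces $n_0 \in \mathbb N$ with $\arg(\sigma_c(n_0))$ within a prescribed small $\delta > 0$ of $\theta$ modulo $2\pi$; put $r_0 = |\sigma_c(n_0)|$. For stage two, multiply $n_0$ by a finite product of primes that are coprime to $n_0$, sufficiently large, and drawn from the set $\mathcal G$ of Lemma \ref{Lem1.6} if $r_0 < r$, or from the set $\mathcal H$ of Lemma \ref{Lem1.7} if $r_0 > r$. The divergences $\sum_{p \in \mathcal G}\log|1+p^c| = \infty$ and $\sum_{p \in \mathcal H}\log|1+p^c| = -\infty$, together with the fact that each factor $|1 + p^c|$ tends to $1$ as $p \to \infty$ along either set (because $a < 0$ forces $p^a \to 0$), let me feed in primes one at a time and halt the first time the running magnitude enters an arbitrarily narrow window around $r$.

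The main obstacle is that these magnitude-adjusting primes also disturb the argument. Writing $1 + p^c = 1 + p^a e^{i\arg(p^c)}$, one checks that for large $p$ in $\mathcal G$ or $\mathcal H$, $|\arg(1 + p^c)| = O(p^a \sin\beta)$ while $|\log|1 + p^c||$ has order $p^a \cos\beta$. Hence each prime contributes an argument shift at most roughly $\tan\beta$ times its contribution to $|\log|\sigma_c(N)||$, and the cumulative stage-two argument drift is bounded by a constant multiple of $\tan\beta \cdot |\log(r/r_0)|$. Since $r_0$ is determined once $n_0$ is fixed, and $\beta$ is a free parameter in Lemmas \ref{Lem1.6} and \ref{Lem1.7}, I choose $\beta$ small enough in advance to force this drift below $\delta$. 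Combining stage-one argument accuracy, the small stage-two argument drift, and the narrow magnitude window yields $|\sigma_c(N) - z| < \epsilon$, establishing density.
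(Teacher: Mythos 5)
Your proposal is correct and follows essentially the same route as the paper: necessity via Lemma \ref{Lem1.4}, Theorem \ref{Thm2.1}, and realness when $b=0$; sufficiency by reducing to $-1\leq a<0$, $b>0$ via Lemma \ref{Lem1.1} and Theorem \ref{Thm2.2}, fixing the argument with Lemma \ref{Lem3.1}, and adjusting the modulus with primes from $\mathcal G$ or $\mathcal H$ while controlling the argument drift by noting it is of order $\tan\beta$ (the paper's explicit constants $2\sin\beta$ and $2\sin\beta/(2-\sqrt2)$) times the logarithmic change in modulus, with $\beta$ chosen small after $n_0$ is fixed. This matches the paper's argument in structure and in all key estimates.
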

\begin{proof} 
We have seen in \eqref{EqEdit3} that $-1\leq a\leq 0$ and $b\neq 0$ if $\overline{R(c)}=\mathbb C$, so we now wish to prove the converse. If $a=0$ and $b\neq 0$, then we know from Theorem \ref{Thm2.2} that $R(c)$ is dense in $\mathbb C$. Therefore, let us assume that $-1\leq a<0$. With the help of Lemma \ref{Lem1.1}, we may also assume that $b>0$. Fix $c$ (with $-1\leq a<0<b$), and choose some $\theta\in\Theta(c)$ and some $r>0$. Let $n$ be a positive integer such that $\arg(\sigma_c(n))=\theta$. We wish to show that \begin{equation}\label{EqEdit4}
re^{i\theta}\text{ is a limit point of }R(c).
\end{equation} If we can accomplish this goal, then we will know that any arbitrary nonzero complex number $z$ is a limit point of $R(c)$. Indeed, Lemma \ref{Lem3.1} allows us to choose $\theta$ arbitrarily close to $\arg(z)$, and we may set $r=\vert z\vert$. Since we may choose $r$ arbitrarily small, it will then follow that $0$ is also a limit point of $R(c)$ so that $\overline{R(c)}=\mathbb C$. \par 
If $re^{i\theta}\in R(c)$, then we are done proving \eqref{EqEdit4} by Theorem \ref{Thm2.2}. Therefore, let us assume that $re^{i\theta}\not\in R(c)$. In particular, $r\neq\vert\sigma_c(n)\vert$ because $re^{i\theta}\neq\sigma_c(n)$. Choose some $\beta,\epsilon>0$ with $\beta<\displaystyle{\frac{\pi}{4}}$. If $r>\vert\sigma_c(n)\vert$, let \[\Omega=2\sin\beta\log\left(\frac{re^\epsilon}{\vert\sigma_c(n)\vert}\right);\] if $r<\vert\sigma_c(n)\vert$, let \[\Omega=\frac{2\sin\beta}{2-\sqrt2}\log\left(\frac{\vert\sigma_c(n)\vert e^\epsilon}{r}\right).\] We will produce a positive integer $N$ such that \begin{equation}\label{EqEdit5} 
\vert r-\vert\sigma_c(N)\vert\vert<r(e^\epsilon-1)\text{ and }\vert\theta-\arg(\sigma_c(N))+2t\pi\vert<\Omega 
\end{equation} for some integer $t$. As $c,n,$ and $r$ are fixed, we may make $\vert re^{i\theta}-\sigma_c(N)\vert$ as small as we wish by initially choosing sufficiently small values of $\beta$ and $\epsilon$ (this is because $r(e^\epsilon-1)\rightarrow 0$ and $\Omega\rightarrow 0$ as $\epsilon\rightarrow 0$ and $\beta\rightarrow 0$). Therefore, the construction of such an integer $N$ will prove \eqref{EqEdit4} (technically, we must insist that $re^{i\theta}\neq\sigma_c(N)$, but this follows from our assumption that $re^{i\theta}\not\in R(c)$).  
\par 
Let us define $G_k$, $H_k$, $\mathcal G$, and $\mathcal H$ as in Lemmas \ref{Lem1.6} and \ref{Lem1.7}. We will also write $\mathcal G\cap\mathbb{P}=\{q_1,q_2,\ldots\}$ and $\mathcal H\cap\mathbb{P}=\{w_1,w_2,\ldots\}$, where $q_1<q_2<\cdots$ and $w_1<w_2<\cdots$. Because $a<0$, we have $q_k^c\rightarrow 0$ and $w_k^c\rightarrow 0$ as $k\rightarrow\infty$. Thus, $\log\vert1+q_k^c\vert<\epsilon$ and  $\log\vert1+w_k^c\vert>-\epsilon$ for all sufficiently large integers $k$. Let us fix some positive integer $K$ large enough so that for all integers $k\geq K$, we have $\log\vert1+q_k^c\vert<\epsilon$, $\log\vert1+w_k^c\vert>-\epsilon$, $q_k\nmid n$, $w_k\nmid n$,  $q_k^a<\sqrt2-1$, and $w_k^a<\sqrt2-1$. 
\par 
Suppose $r>\vert\sigma_c(n)\vert$. Because $\log\vert1+q_k^c\vert<\epsilon$ for all $k\geq K$ and \\ $\displaystyle{\sum_{k=K}^\infty\log\vert1+q_k^c\vert=\infty}$ by Lemma \ref{Lem1.6}, 
there must exist some integer $M\geq K$ such that \[\log\left(\frac{r}{\vert{\sigma_c(n)\vert}}\right)-\epsilon<\sum_{k=K}^M\log\vert1+q_k^c\vert<\log\left(\frac{r}{\vert\sigma_c(n)\vert}\right)+\epsilon.\] This yields the inequalities
\begin{equation} \label{EqEdit22}
re^{-\epsilon}<\vert\sigma_c(n)\vert\prod_{k=K}^M\vert 1+q_k^c\vert<re^{\epsilon}.
\end{equation} Let $\displaystyle{N=n\prod_{k=K}^Mq_k}$. Because $q_k$ is relatively prime to $n$ for all $k\geq K$, we have \[\sigma_c(N)=\sigma_c(n)\prod_{k=K}^M\sigma_c(q_k)=\sigma_c(n)\prod_{k=K}^M(1+q_k^c).\] This implies that $\vert\sigma_c(N)\vert=\vert\sigma_c(n)\vert\displaystyle{\prod_{k=K}^M\vert1+q_k^c\vert\in(re^{-\epsilon},re^\epsilon)}$, so \[\vert r-\vert\sigma_c(N)\vert\vert<\max\{r-re^{-\epsilon},re^{\epsilon}-r\}=re^{\epsilon}-r=r(e^\epsilon-1),\] as desired in \eqref{EqEdit5}. 
\par 
We now prove that $\vert\theta-\arg(\sigma_c(N))+2t\pi\vert<\Omega$ for some integer $t$. We have \[\arg(\sigma_c(N))=\arg(\sigma_c(n))+\sum_{k=K}^M\arg(1+q_k^c)+2t\pi=\theta+\sum_{k=K}^M\arg(1+q_k^c)+2t\pi\] for some integer $t$. Therefore, \begin{equation}\label{EqEdit7} \vert\theta-\arg(\sigma_c(N))+2t\pi\vert=\left\lvert\sum_{k=K}^M\arg(1+q_k^c)\right\rvert\leq\sum_{k=K}^M\vert\arg(1+q_k^c)\vert. 
\end{equation} Let us fix some $k\in\{K,K+1,\ldots,M\}$. Because $q_k\in\mathcal G$, we know from Lemma \ref{Lem1.6} that $-\beta\leq\arg(q_k^c)\leq\beta$. Also, $\vert q_k^c\vert=q_k^a$. Using \eqref{EqEdit2}, we have \begin{equation}\label{EqEdit8} \vert\arg(1+q_k^c)\vert\leq\tan^{-1}\left(\frac{q_k^a\sin\beta}{1+q_k^a\cos\beta}\right)<\frac{q_k^a\sin\beta}{1+q_k^a\cos\beta}<q_k^a\sin\beta. 
\end{equation} If we write $\eta_k=\displaystyle{\sqrt{1+2q_k^a\cos\beta+q_k^{2a}}}$, then Lemma \ref{Lem1.6} tells us that \begin{equation}\label{EqEdit9} \eta_k\leq\vert 1+q_k^c\vert. 
\end{equation} Recall that we chose $K$ large enough to ensure that $q_k^a<\sqrt{2}-1$. Also, $\cos\beta>\displaystyle{\frac{1}{\sqrt 2}}$ because we chose $\displaystyle{\beta<\frac{\pi}{4}}$. We have $q_k^{2a}<\sqrt{2}q_k^a-q_k^a$, so \[q_k^a<\sqrt{2}q_k^a-q_k^{2a}=\sqrt{2}q_k^a-\frac{1}{2}(\sqrt{2}q_k^a)^2<\log(1+\sqrt{2}q_k^a)\] \[<\log(1+2q_k^a\cos\beta)<2\log\eta_k,\] where we have used the fact that $x-\frac 12x^2<\log(1+x)$ for all $x>0$. This shows that \begin{equation}\label{EqEdit10} 
\frac{q_k^a}{\log\eta_k}<2. 
\end{equation} From the inequalities \eqref{EqEdit7}, \eqref{EqEdit8}, \eqref{EqEdit9}, and \eqref{EqEdit10}, we get \[\vert\theta-\arg(\sigma_c(N))+2t\pi\vert\leq\sum_{k=K}^M\vert\arg(1+q_k^c)\vert<\sum_{k=K}^Mq_k^a\sin\beta\] \[=\sin\beta\log\prod_{k=K}^M\eta_k^{q_k^a/\log\eta_k}<\sin\beta\log\prod_{k=K}^M\eta_k^2\] \[=2\sin\beta\log\prod_{k=K}^M\eta_k\leq2\sin\beta\log\prod_{k=K}^M\vert1+q_k^c\vert.\] We now use the fact that $\displaystyle{\vert\sigma_c(n)\vert\prod_{k=K}^M\vert1+q_k^c\vert<re^\epsilon}$ (obtained in \eqref{EqEdit22}) to conclude that \[\vert\theta-\arg(\sigma_c(N))+2t\pi\vert<2\sin\beta\log\left(\frac{re^\epsilon}{\vert\sigma_c(n)\vert}\right)=\Omega.\] This completes the proof of \eqref{EqEdit5} in the case $r>\vert\sigma_c(n)\vert$. 
\par 
Let us now assume $r<\vert\sigma_c(n)\vert$.  The proof of this case is quite similar to the previous case. Because $\log\vert1+w_k^c\vert>-\epsilon$ for all $k\geq K$ and $\displaystyle{\sum_{k=K}^\infty\log\vert1+w_k^c\vert=-\infty}$ by Lemma \ref{Lem1.7}, 
there must exist some integer $W\geq K$ such that \[\log\left(\frac{r}{\vert{\sigma_c(n)\vert}}\right)-\epsilon<\sum_{k=K}^W\log\vert1+w_k^c\vert<\log\left(\frac{r}{\vert\sigma_c(n)\vert}\right)+\epsilon.\] This 
yields the inequalities 
\begin{equation} \label{EqEdit23} 
re^{-\epsilon}<\vert\sigma_c(n)\vert\prod_{k=K}^W\vert 1+w_k^c\vert<re^{\epsilon}.
\end{equation} In this case, we will let $\displaystyle{N=n\prod_{k=K}^Ww_k}$. Because $w_k$ is relatively prime to $n$ for all $k\geq K$, we have \[\sigma_c(N)=\sigma_c(n)\prod_{k=K}^W\sigma_c(w_k)=\sigma_c(n)\prod_{k=K}^W(1+w_k^c).\] This shows that $\vert\sigma_c(N)\vert=\vert\sigma_c(n)\vert\displaystyle{\prod_{k=K}^W\vert1+w_k^c\vert\in(re^{-\epsilon},re^\epsilon)}$, so \\ $\vert r-\vert\sigma_c(N)\vert\vert<r(e^\epsilon-1)$ once again. \par 
Now, \[\arg(\sigma_c(N))=\arg(\sigma_c(n))+\sum_{k=K}^W\arg(1+w_k^c)+2t\pi\] \[=\theta+\sum_{k=K}^W\arg(1+w_k^c)+2t\pi\] for some integer $t$, so 
\begin{equation}\label{EqEdit11} 
\vert\theta-\arg(\sigma_c(N))+2t\pi\vert=\left\lvert\sum_{k=K}^W\arg(1+w_k^c)\right\rvert\leq\sum_{k=K}^W\vert\arg(1+w_k^c)\vert. 
\end{equation} 
Let us fix some $k\in\{K,K+1,\ldots,W\}$. Because $w_k\in\mathcal H$, we know from Lemma \ref{Lem1.7} that $\arg(w_k^c)\leq-\pi+\beta$ or $\arg(w_k^c)\geq\pi-\beta$.
Recalling that we chose $K$ large enough to guarantee $w_k^a<\sqrt2-1$, we see by Lemma \ref{Lem1.9} that 
\begin{equation}\label{EqEdit12} \vert\arg(1+w_k^c)\vert<\frac{\sin\beta}{2-\sqrt 
2}w_k^a. 
\end{equation} 
In addition, $\sqrt2 w_k^a-w_k^{2a}>w_k^a$. Because $\cos\beta\geq\displaystyle{\frac{1}{\sqrt 2}}$, we have $2w_k^a\cos\beta-w_k^{2a}>w_k^a$. If we write $\displaystyle{\mu_k=\sqrt{1-2w_k^a\cos\beta+w_k^{2a}}}$, then Lemma \ref{Lem1.7} tells us that \begin{equation}\label{EqEdit13} 
\vert 1+w_k^c\vert\leq\mu_k. 
\end{equation} 
We also see that $\mu_k<1$, so $-\log\mu_k>0$. Using the inequality $x\leq-\log(1-x)$, which 
holds for all $x<1$, we have \[w_k^a<2w_k^a\cos\beta-w_k^{2a}\leq-\log(1-(2w_k^a\cos\beta-w_k^{2a}))=-2\log\mu_k.\] Thus, \begin{equation}\label{EqEdit14} -\frac{w_k^a}{\log\mu_k}<2. 
\end{equation} 
Combining inequalities \eqref{EqEdit11}, \eqref{EqEdit12}, \eqref{EqEdit13}, and \eqref{EqEdit14}, we get \[\vert\theta-\arg(\sigma_c(N))+2t\pi\vert\leq\sum_{k=K}^W\vert\arg(1+w_k^c)\vert<\frac{\sin\beta}{2-\sqrt2}\sum_{k=K}^Ww_k^a\] \[=\frac{\sin\beta}{2-\sqrt2}\log\prod_{k=K}^W(1/\mu_k)^{-w_k^a/\log\mu_k}<\frac{\sin\beta}{2-\sqrt2}\log\prod_{k=K}^W(1/\mu_k)^2\] \[=\frac{2\sin\beta}{2-\sqrt2}\log\prod_{k=K}^W(1/\mu_k)\leq\frac{2\sin\beta}{2-\sqrt2}\log\prod_{k=K}^W\vert1+w_k^c\vert^{-1}.\] Using the fact that $\displaystyle{\vert\sigma_c(n)\vert\prod_{k=K}^W\vert1+w_k^c\vert>re^{-\epsilon}}$ (see \eqref{EqEdit23}), we have \[\prod_{k=K}^W\vert1+w_k^c\vert^{-1}<\frac{\vert\sigma_c(n)\vert e^\epsilon}{r}.\] Hence, we conclude that \[\vert\theta-\arg(\sigma_c(N))+2t\pi\vert<\frac{2\sin\beta}{2-\sqrt2}\log\left(\frac{\vert\sigma_c(n)\vert e^\epsilon}{r}\right)=\Omega.\] This completes the proof of \eqref{EqEdit5} in the case $r<\vert\sigma_c(n)\vert$. 
\end{proof} 
Recall the section of the proof of Theorem \ref{Thm2.2} in which we proved that $\overline{R(c)}=\mathbb C$ whenever $a=0$ and $b\neq 0$. We proved this fact by describing the construction of an integer $N$ and showing that we could make $\sigma_c(N)$ arbitrarily close to any predetermined complex number. The observant reader may have noticed that the integer $N$ that we constructed was squarefree. Furthermore, we constructed $N$ using primes that could have been arbitrarily large. In other words, there was never an upper bound on the sizes of the required primes. The same observations are true of the proofs of Lemma \ref{Lem3.1} and Theorem \ref{Thm3.1}. In particular, we chose $\theta$ in the proof of Theorem \ref{Thm3.1} to be an arbitrary element of $\Theta(c)$, and we could have chosen $n$ to be a squarefree number with large prime divisors. Therefore, we obtain the following corollary. 
\begin{corollary} \label{Cor3.1} 
Let $D$ be a positive integer, and let $E(D)$ be the set of squarefree integers whose prime factors are all greater than $D$. Suppose $-1\leq a\leq 0$ and $b\neq 0$. Let $\sigma_c(E(D))=\{\sigma_c(n)\in\mathbb C\colon n\in E(D)\}$. Then $\sigma_c(E(D))$ is dense in $\mathbb C$. 
\end{corollary}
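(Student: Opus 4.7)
My plan is to verify the corollary by revisiting the density constructions used earlier: the proof of Theorem \ref{Thm2.2} in the case $a = 0$, and the proof of Theorem \ref{Thm3.1} (which in turn relies on Lemma \ref{Lem3.1}) in the case $-1 \leq a < 0$. In each construction I will check two things: that the witnessing integer $N$ is automatically a product of pairwise distinct primes, and that every one of those primes can be forced to exceed $D$. Once both points are confirmed, the same approximation arguments show that for any nonzero $z \in \mathbb C$ and any $\varepsilon > 0$ we can produce $N \in E(D)$ with $|\sigma_c(N) - z| < \varepsilon$, yielding the density claim (the point $0$ is then also a limit point of $\sigma_c(E(D))$, exactly as in Theorem \ref{Thm3.1}).

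For $a = 0$, the integer built in the proof of Theorem \ref{Thm2.2} has the form $N = q_1 q_2 q_3 q_4 q_5 P_1 \cdots P_h$, a product of distinct primes chosen one at a time via Lemma \ref{Lem1.3}. The proof of Lemma \ref{Lem1.3} establishes density of $\Psi(c)$ on $C$ using only the asymptotic $\lim_{k \to \infty} p_{k+1}/p_k = 1$, which is unaffected by discarding any finite initial segment of primes. Consequently Lemma \ref{Lem1.3} remains valid with $\mathbb P$ replaced by $\{p \in \mathbb P : p > D\}$, so each of $q_1, \ldots, q_5, P_1, \ldots, P_h$ can be required to exceed $D$ while preserving the original distinctness conditions.

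For $-1 \leq a < 0$, the proof of Theorem \ref{Thm3.1} sets $N = n \prod_{k = K}^{M} q_k$ (or $N = n \prod_{k = K}^{W} w_k$), where the $q_k$'s and $w_k$'s enumerate $\mathcal G \cap \mathbb P$ and $\mathcal H \cap \mathbb P$, while $n$ itself is supplied by Lemma \ref{Lem3.1} as $n = \prod_{j = K'}^{M'} s_j$ with distinct $s_j \in \mathcal J \cap \mathbb P$. All primes involved are pairwise distinct, since the $s_j, q_k, w_k$ are drawn from enumerations and the conditions $q_k \nmid n$, $w_k \nmid n$ are already imposed in the proof of Theorem \ref{Thm3.1}. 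Enlarging the lower cutoffs $K$ and $K'$ is always permissible, because the key inputs supplied by Lemmas \ref{Lem1.6}, \ref{Lem1.7}, and \ref{Lem1.8} (namely the divergences $\sum \log|1 + q_k^c| = +\infty$, $\sum \log|1 + w_k^c| = -\infty$, and $\sum 1/s_k = \infty$, together with $\log|1 + q_k^c| \to 0$, $\log|1+w_k^c|\to 0$, and $\arg(\sigma_c(s_k)) \to 0$) are all insensitive to removing finitely many terms. Thus $K, K'$ can be chosen so large that every prime factor of $N$ exceeds $D$, giving $N \in E(D)$.

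The main obstacle is not really mathematical but organizational: one must confirm, for each auxiliary lemma invoked (Lemmas \ref{Lem1.3}, \ref{Lem1.6}, \ref{Lem1.7}, \ref{Lem1.8}), that its conclusion survives when the underlying prime set is restricted to $\{p \in \mathbb P : p > D\}$. In every case this reduces to the observation that the relevant sums and density statements depend only on the tail of the prime sequence, so no modification of the underlying estimates is needed. With this verification in hand, the corollary is obtained by running the previous proofs verbatim inside the restricted prime set.
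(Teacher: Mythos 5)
Your proposal is correct and takes essentially the same approach as the paper: the paper's justification for Corollary \ref{Cor3.1} is precisely the observation, made in the text immediately preceding it, that the witness integers $N$ built in the proofs of Theorem \ref{Thm2.2} (case $a=0$), Lemma \ref{Lem3.1}, and Theorem \ref{Thm3.1} are squarefree products of primes that may be chosen arbitrarily large, since the supporting lemmas (\ref{Lem1.3}, \ref{Lem1.6}, \ref{Lem1.7}, \ref{Lem1.8}) all depend only on tails of the prime sequence. You have simply spelled out this verification in more detail than the paper does.
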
 
We now focus on the sets $R(c)$ that arise when $a<-1$. We saw in Theorem \ref{Thm2.1} that these sets are bounded, so it is natural to inquire about the shapes of their closures. We will prove two theorems in order to provide a taste of the questions that one might wish to ask about these sets. The first theorem is very straightforward, but leads to an interesting question. 
\begin{theorem} \label{Thm3.2}
If $a<-1$, then $\displaystyle{\sum_{n\leq x}\sigma_c(n)=x\zeta(1-c)+O(1)}$. 
\end{theorem}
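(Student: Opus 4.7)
The plan is to use the standard divisor-sum swap, writing
\[
\sum_{n\leq x}\sigma_c(n)=\sum_{n\leq x}\sum_{d\mid n}d^c=\sum_{d\leq x}d^c\left\lfloor\frac{x}{d}\right\rfloor,
\]
and then extracting the main term via $\lfloor x/d\rfloor=x/d-\{x/d\}$. This splits the sum into
\[
x\sum_{d\leq x}d^{c-1}-\sum_{d\leq x}d^c\{x/d\}.
\]
I will then show the first piece equals $x\zeta(1-c)+O(1)$ and the second piece is $O(1)$.

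For the main term, the hypothesis $a<-1$ gives $\Re(c-1)=a-1<-2$, so $\sum_{d=1}^\infty d^{c-1}$ converges absolutely to $\zeta(1-c)$. Writing $\sum_{d\leq x}d^{c-1}=\zeta(1-c)-\sum_{d>x}d^{c-1}$ and estimating the tail by comparison with an integral, I get
\[
\left\lvert x\sum_{d>x}d^{c-1}\right\rvert\leq x\sum_{d>x}d^{a-1}\leq x\int_x^\infty t^{a-1}\,dt=\frac{x^{a+1}}{\lvert a\rvert}.
\]
Since $a+1<0$, this is bounded (indeed tends to $0$) as $x\to\infty$, and is certainly $O(1)$.

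For the error piece, I simply use $|\{x/d\}|<1$ and $|d^c|=d^a$ to get
\[
\left\lvert\sum_{d\leq x}d^c\{x/d\}\right\rvert\leq\sum_{d\leq x}d^a\leq\sum_{d=1}^\infty d^a=\zeta(-a),
\]
which is finite because $a<-1$ means $-a>1$. Combining these bounds yields the claim. There is no real obstacle here: the only thing to verify is that $a<-1$ provides absolute convergence of both $\sum d^{c-1}$ and $\sum d^c$, which it does with room to spare, and identifies the resulting constant as $\zeta(1-c)$.
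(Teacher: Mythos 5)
Your proposal is correct and follows essentially the same route as the paper: both swap the order of summation to get $\sum_{d\le x} d^c\lfloor x/d\rfloor$, split off the main term $x\sum_{d\le x} d^{c-1}$, identify it with $x\zeta(1-c)$ up to a tail bounded using $a<-1$, and absorb the fractional-part contribution into $O(1)$ via $\sum d^a \le \zeta(-a)$. No meaningful difference in method or estimates.
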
 
\begin{proof} 
As $x\rightarrow\infty$, we have \[\sum_{n\leq x}\sigma_c(n)=\sum_{n\leq x}\sum_{km=n}k^c=\sum_{k\leq x}\sum_{m\leq \frac xk}k^c=\sum_{k\leq x}\left\lfloor\frac{x}{k}\right\rfloor k^c\] \[=x\sum_{k\leq x}k^{c-1}+O(1)\sum_{k\leq x}k^c=x\zeta(1-c)-x\sum_{k>x}k^{c-1}+O(1).\] Now, \[\left\lvert x\sum_{k>x}k^{c-1}\right\rvert\leq x\sum_{k>x}\vert k^{c-1}\vert=x\sum_{k>x}k^{a-1}\leq x\int_{x-1}^\infty t^{a-1}\,dt=\frac{x}{\vert a\vert(x-1)^{\vert a\vert}}=o(1),\] and the result follows. 
\end{proof} 
\begin{corollary} \label{Cor3.2} 
If $a<-1$, then $\displaystyle{\lim_{x\rightarrow\infty}\frac 1x\sum_{n\leq x}\sigma_c(n)=\zeta(1-c)}$. 
\end{corollary}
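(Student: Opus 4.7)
The corollary follows immediately from Theorem \ref{Thm3.2}, so the plan is simply to divide and take the limit. Specifically, Theorem \ref{Thm3.2} gives us, for $a<-1$,
\[\sum_{n\leq x}\sigma_c(n)=x\zeta(1-c)+O(1),\]
meaning there is a constant $C>0$ (depending only on $c$) such that
\[\left\lvert\sum_{n\leq x}\sigma_c(n)-x\zeta(1-c)\right\rvert\leq C\]
for all sufficiently large $x$.

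Dividing both sides of this inequality by $x$ yields
\[\left\lvert\frac{1}{x}\sum_{n\leq x}\sigma_c(n)-\zeta(1-c)\right\rvert\leq\frac{C}{x},\]
and letting $x\rightarrow\infty$ makes the right-hand side tend to $0$. This gives the desired conclusion
\[\lim_{x\rightarrow\infty}\frac{1}{x}\sum_{n\leq x}\sigma_c(n)=\zeta(1-c).\]

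Since the argument is a one-line consequence of the big-$O$ estimate already proved, there is no genuine obstacle; the only thing worth noting is that $\zeta(1-c)$ is a well-defined complex number because $\operatorname{Re}(1-c)=1-a>2$, so the defining series $\sum_{k=1}^\infty k^{c-1}$ converges absolutely. Thus the entire proof is essentially the observation that $O(1)/x\to 0$.
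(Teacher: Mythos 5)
Your argument is correct and is precisely the intended one: the paper presents Corollary \ref{Cor3.2} as an ``obvious corollary'' of Theorem \ref{Thm3.2} with no separate proof, and dividing the asymptotic $\sum_{n\leq x}\sigma_c(n)=x\zeta(1-c)+O(1)$ by $x$ and letting $x\to\infty$ is exactly that obvious step.
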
 
Suppose $a<-1$. Since $\overline{R(c)}$ is Lebesgue measurable (because it is closed), we may define $\mathcal A(c)$ to be the area of $\overline{R(c)}$ in the complex plane. Provided $\mathcal A(c)\neq 0$, we may let \[C(c)=\frac{1}{\mathcal A(c)}\iint_{\overline{R(c)}}z\,dA\] be the centroid of $\overline{R(c)}$ (where $dA=d(\Re(z))\,d(\Im(z))$ represents a differential area element in the complex plane). Corollary \ref{Cor3.2} endorses $\zeta(1-c)$ as a potential candidate for $C(c)$, although it certainly does not provide a proof. Unfortunately, a rigorous determination of the value of $C(c)$ seems to require knowledge of the shape of $\overline{R(c)}$ because of the apparent necessity of calculating the integrals involved in the definitions of $\mathcal A(c)$ and $C(c)$. The limit in Corollary \ref{Cor3.2} does not appear to be too useful for these purposes because its value depends on the ordering of $\mathbb N$. That is, if we let $m_1,m_2,\ldots$ be some enumeration of the positive integers, then it could very well be the case that $\displaystyle{\lim_{x\rightarrow\infty}\frac{1}{x}\sum_{n\leq x}\sigma_c(m_n)\neq\zeta(1-c)}$ (or that this limit does not exist). Hence, for now, we will let $C(c)$ be.  
\par
It seems natural to ask for the values of $c$ with $a<-1$ for which the sets $\overline{R(c)}$ are connected. The following theorem will show that if $a$ is sufficiently negative (meaning negative and sufficiently large in absolute value), then $\overline{R(c)}$ is separated. In fact, the theorem states that for any positive integer $N$, if $a$ is sufficiently negative, then $\overline{R(c)}$ is a disjoint union of at least $N$ closed sets. This is somewhat unintuitive since, for each $n\in\mathbb N$, $\sigma_c(n)\rightarrow 1$ as $a\rightarrow-\infty$. In other words, the sets $\overline{R(c)}$ ``shrink" while becoming ``more separated" as $a\rightarrow-\infty$. 
\begin{theorem} \label{Thm3.3} 
For each positive integer $N$, there exists a real number $\rho_N\leq-1$ such that if $a<\rho_N$, then $\overline{R(c)}$ is a union of at least $N$ disjoint closed sets. 
\end{theorem}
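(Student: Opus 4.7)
The plan is to partition $\mathbb N$ into finitely many classes indexed by a ``low-prime exponent signature'' of $n$, show that $\sigma_c$ sends each class into a tiny disk in $\mathbb C$, and verify that these disks are pairwise disjoint for $a$ sufficiently negative. Since $R(c)$ is then a finite disjoint union of the corresponding subsets, passing to closures gives at least that many pairwise disjoint closed pieces of $\overline{R(c)}$.

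Given $N$, I would pick $K$ with $p_K\geq 2^{N-1}$ and for each $i\in\{1,\ldots,K\}$ set $M_i=\lfloor\log p_K/\log p_i\rfloor$, so that $p_i^{M_i}\leq p_K<p_i^{M_i+1}$. Associate to each $n$ its signature $\tau(n)=(\min(\nu_{p_i}(n),M_i))_{i=1}^K\in\mathcal T:=\prod_i\{0,1,\ldots,M_i\}$, and for each $\tau\in\mathcal T$ let $R_\tau(c)=\{\sigma_c(n):\tau(n)=\tau\}$ and $z_\tau=\prod_{i=1}^K\sigma_c(p_i^{\tau_i})$. The $R_\tau$'s partition $R(c)$, and $|\mathcal T|=\prod_i(M_i+1)\geq M_1+1\geq N$, so it suffices to show that the closures $\overline{R_\tau(c)}$ are pairwise disjoint for $a$ sufficiently negative.

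For the cluster-radius bound, factor $n=\prod_i p_i^{\alpha_i}\cdot m$ with $\gcd(m,p_1\cdots p_K)=1$. Multiplicativity of $\sigma_c$, together with the tail estimate $|\sigma_c(p_i^{\alpha_i})-\sigma_c(p_i^{M_i})|\leq p_i^{(M_i+1)a}/(1-p_i^a)$ whenever $\alpha_i\geq M_i$ and $|\sigma_c(m)-1|\leq\prod_{p>p_K}(1-p^a)^{-1}-1$, yields a uniform bound $|\sigma_c(n)-z_\tau|\leq r(a)=O\!\left(\sum_i p_i^{(M_i+1)a}+p_{K+1}^a\right)$ over all $n$ with $\tau(n)=\tau$. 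Because $p_i^{M_i+1}>p_K$ for every $i$ and $p_{K+1}>p_K$, the ratio $r(a)/p_K^a$ tends to $0$ as $a\to-\infty$. For the center separations, write $z_{\tau'}/z_\tau=\prod_i(1+\delta_i)$ with $\delta_i=\sigma_c(p_i^{\tau'_i})/\sigma_c(p_i^{\tau_i})-1$; a short expansion gives $|\delta_i|\sim p_i^{(\min(\tau_i,\tau'_i)+1)a}$ whenever $\tau_i\neq\tau'_i$, and $\delta_i=0$ otherwise. Since $\tau_i\neq\tau'_i$ with both values in $\{0,\ldots,M_i\}$ forces $\min(\tau_i,\tau'_i)+1\leq M_i$, one has $p_i^{\min(\tau_i,\tau'_i)+1}\leq p_K$ and hence $|\delta_i|\geq p_K^a(1+o(1))$ for at least one index $i$.

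The key closing step is to pass from ``$|\delta_i|\gtrsim p_K^a$ for some $i$'' to ``$|\sum_i\delta_i|\gtrsim p_K^a$'': unique factorization forces the prime powers $p_i^{\min(\tau_i,\tau'_i)+1}$ to be pairwise distinct across the indices in play, so the maximizer $i^\ast$ of $|\delta_i|$ is unique and strictly dominates every other $|\delta_j|$ as $a\to-\infty$. This gives $|\sum_i\delta_i|\geq\tfrac{1}{2}p_K^a$ for $|a|$ large, and after absorbing the factor $|z_\tau|=1+o(1)$ together with the quadratic remainder in $\prod(1+\delta_i)-1$ (controlled by $\max_i|\delta_i|\to 0$) one concludes $|z_\tau-z_{\tau'}|\geq\tfrac{1}{4}p_K^a$. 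Picking $\rho_N$ so that $r(a)<\tfrac{1}{16}p_K^a$ for all $a<\rho_N$ then makes the $\overline{R_\tau(c)}$ pairwise disjoint closed sets whose union is $\overline{R(c)}$, giving at least $|\mathcal T|\geq N$ components. The main analytic obstacle is precisely this separation step---ruling out destructive interference among the $\delta_i$ when several share the dominant magnitude---which is resolved by the multiplicative independence of the primes.
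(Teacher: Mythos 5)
Your proposal is correct, and it takes a genuinely different route from the paper.

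The paper partitions $\mathbb N$ by the \emph{smallest prime factor}: with $S_j=\{n\colon\text{least prime factor of }n\text{ is }p_j\}$ and $V_k=\{n\colon p_1\cdots p_k\nmid n\text{ for all }j\le k\}$, it shows that for $a$ sufficiently negative the $k+1$ closed sets $\overline{\sigma_c(S_1)},\ldots,\overline{\sigma_c(S_k)},\overline{\sigma_c(V_k)}$ are pairwise disjoint. The separation there is ``one-sided'' and cancellation-free: if $m\in S_k$, then $\sigma_c(m)$ inherits from the $p_k$-part a contribution of size roughly $p_k^a$, while for $n\in V_k$ all divisors exceeding $1$ are at least $p_{k+1}$, so $|\sigma_c(n)-1|=O(p_{k+1}^a)\ll p_k^a$. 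No interference between primes has to be ruled out, because only one prime matters for each separation. You instead refine the partition by a \emph{truncated exponent signature} $\tau(n)=(\min(\nu_{p_i}(n),M_i))_i$, showing that each class clusters in a small ball around an explicit center $z_\tau=\prod_i\sigma_c(p_i^{\tau_i})$ with radius $r(a)=o(p_K^a)$, and then that the centers are pairwise $\gtrsim p_K^a$ apart. The price is precisely the cancellation issue you flag: two centers $z_\tau,z_{\tau'}$ differ through a product $\prod_i(1+\delta_i)$, and you must rule out destructive interference among the $\delta_i$. Your resolution---that the prime powers $p_i^{\min(\tau_i,\tau_i')+1}$ are pairwise distinct integers, so as $a\to-\infty$ a unique term dominates the sum---is correct and is the one new idea your route requires. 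What the extra work buys you is a sharper structural picture: your argument shows $\overline{R(c)}$ decomposes into $\prod_i(M_i+1)$ (exponentially many in $K$) small, explicitly centered clusters, whereas the paper's partition yields only $k+1$ pieces and says nothing about their diameters. The paper's decomposition is thus simpler to set up and to verify; yours is finer and more informative about the geometry of $\overline{R(c)}$ as $a\to-\infty$.
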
 
\begin{proof} 
We will assume that $a<-1$ throughout this proof. Note that we may set $\rho_1=-1$. We will let $\sigma_c(T)$ denote the image under $\sigma_c$ of a set $T$ of positive integers. 
For each $j\in\mathbb N$, let $p_j$ denote the $j^{th}$ prime number. Let $V_j$ be the set of positive integers that are not divisible by any of the first $j$ primes, and let $S_j$ denote the set of positive integers $n$ such that the smallest prime divisor of $n$ is $p_j$. In symbols, $V_j=\{n\in\mathbb N\colon p_\ell\nmid n\:\forall\:\ell\in\{1,2,\ldots,j\}\}$, and $S_j=V_{j-1}\cap p_j\mathbb N$, where we convene to let $V_0=\mathbb N$. Choose some integer $k$. We will show that there exists a real number $\tau_k\leq -1$ and a function $\delta_k\colon(-\infty,\tau_k)\rightarrow(0,\infty)$ such that if $a<\tau_k$, then $\vert\sigma_c(m)-\sigma_c(n)\vert>\delta_k(a)$ for all $m\in S_k$ and $n\in V_k$. This will show that $\overline{\sigma_c(S_k)}$ is disjoint from $\overline{\sigma_c(V_k)}$ whenever $a<\tau_k$. In particular, it will follow that  $\overline{\sigma_c(S_k)}$ is disjoint from $\overline{\sigma_c(S_j)}$ for all integers $j>k$ whenever $a<\tau_k$. Setting $\rho_{k+1}=\min\{\tau_1,\tau_2,\ldots,\tau_k\}$, we will see that if $a<\rho_{k+1}$, then $\overline{\sigma_c(S_1)},\overline{\sigma_c(S_2)},\ldots,\overline{\sigma_c(S_k)},\overline{\sigma_c(V_k)}$ are $k+1$ disjoint closed sets whose union is $\overline{R(c)}$.
\par 
Choose some $m\in S_k$ and $n\in V_k$. Note that we may write $m=p_k^th$, where $t$ is a positive integer and $h\in V_k$. If we let $\displaystyle{L_k(x)=\sum_{v\in V_k\backslash\{1\}}v^x}$ for all $x<-1$, then we have \[\vert\sigma_c(n)-1\vert=\left\lvert\sum_{\substack{d\vert n \\ d>1}} d^c\right\rvert\leq\sum_{\substack{d\vert n \\ d>1}}\vert d^c\vert=\sum_{\substack{d\vert n \\ d>1}}d^a<L_k(a).\] Similarly, $\vert\sigma_c(h)-1\vert<L_k(a)$. Observe that since $p_{k+1}$ is the smallest element of $V_k\setminus\{1\}$, we have \[L_k(x)<\sum_{v=p_{k+1}}^\infty v^x=p_{k+1}^x+\sum_{v=p_{k+1}+1}^\infty v^x<p_{k+1}^x+\int_{p_{k+1}}^\infty u^x\,du\] \[=p_{k+1}^x+\frac{p_{k+1}}{\vert x+1\vert}p_{k+1}^x=\left(1+\frac{p_{k+1}}{\vert x+1\vert}\right)p_{k+1}^x.\] 
This implies that  $L_k(x)=O(p_{k+1}^x)$ as $x\rightarrow-\infty$, so \[L_k(x)(p_k^x+p_k^{2x}+2)+p_k^{2x}=O\left((p_{k+1}p_k)^x+\left(p_{k+1}p_k^2\right)^x+2p_{k+1}^x+\left(p_k^2\right)^x\right)\] \[=o(p_k^x).\] Hence, there exists some number $\tau_k\leq-1$ such that \begin{equation} \label{Eq1}
L_k(a)\left(p_k^a+p_k^{2a}+2\right)+p_k^{2a}<p_k^a 
\end{equation} for all $a<\tau_k$. Let us define $\delta_k\colon(-\infty,\tau_k)\rightarrow(0,\infty)$ by 
\[\delta_k(x)=\frac{1}{1+p_k^x}\left(p_k^x-p_k^{2x}-L_k(x)(p_k^x+p_k^{2x}+2)\right).\] If $a<\tau_k$, then \[\frac{\vert\sigma_c(p_k^t)-1\vert-\delta_k(a)}{\vert\sigma_c(p_k^t)\vert+1}=\frac{\left\lvert\left(1-p_k^{(t+1)c}\right)/(1-p_k^c)-1\right\rvert-\delta_k(a)}{\left\lvert\left(1-p_k^{(t+1)c}\right)/(1-p_k^c)\right\rvert+1}\] \[=\frac{\left\lvert \left(1-p_k^{(t+1)c}
\right)-(1-p_k^c)\right\rvert-\delta_k(a)\vert 1-p_k^c\vert}{\left\lvert1-p_k^{(t+1)c}\right\rvert+\vert 1-p_k^c\vert}\] \[=\frac{\left\lvert p_k^c-p_k^{(t+1)c}\right\rvert-\delta_k(a)\vert 1-p_k^c\vert}{\left\lvert1-p_k^{(t+1)c}\right\rvert+\vert 1-p_k^c\vert}\geq\frac{p_k^a-p_k^{(t+1)a}-\delta_k(a)(1+p_k^a)}{p_k^a+p_k^{(t+1)a}+2}\] \[\geq
\frac{p_k^a-p_k^{2a}-\delta_k(a)(1+p_k^a)}{p_k^a+p_k^{2a}+2}=L_k(a),\] so $\displaystyle{\vert\sigma_c(p_k^t)-1\vert- L_k(a)(\vert\sigma_c(p_k^t)\vert+1)\geq \delta_k(a)}$. Therefore, \[\vert\sigma_c(m)-\sigma_c(n)\vert=\vert(\sigma_c(p_k^t)-1)-(\sigma_c(p_k^t)-\sigma_c(m))-(\sigma_c(n)-1)\vert\] \[\geq\vert\sigma_c(p_k^t)-1\vert-\vert\sigma_c(p_k^t)-\sigma_c(m)\vert-\vert\sigma_c(n)-1\vert\] \[=\vert\sigma_c(p_k^t)-1\vert-\vert\sigma_c(p_k^t)\vert\vert \sigma_c(h)-1\vert-\vert\sigma_c(n)-1\vert\] \[>\vert\sigma_c(p_k^t)-1\vert-\vert\sigma_c(p_k^t)\vert L_k(a)-L_k(a)\] \[=\vert\sigma_c(p_k^t)-1\vert-L_k(a)(\vert\sigma_c(p_k^t)\vert+1)\geq\delta_k(a)\] for all $a<\tau_k$. This is what we sought to prove.  
\end{proof} 
The expressions in the proof of Theorem \ref{Thm3.3} get a bit messy, and we probably could have simplified the argument by using more careless estimates. However, we organized the proof in order to give an upper bound for the values of $\tau_k$. That is, we showed that we may let $\tau_k\leq-1$ be any number such that \eqref{Eq1} holds for all $a<\tau_k$. In particular, we have the following corollary. 
\begin{corollary} \label{Cor3.3} 
If $a\leq -3.02$, then $\overline{R(c)}$ is separated. 
\end{corollary}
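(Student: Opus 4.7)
The plan is to apply Theorem \ref{Thm3.3} with $N = 2$, which in the notation of its proof corresponds to $k = 1$ and $p_k = 2$. With $V_1$ the set of odd positive integers, we have $L_1(a) = \sum_{n \geq 3,\ n \text{ odd}} n^a = (1 - 2^a)\zeta(-a) - 1$, and inequality \eqref{Eq1} reads
\[L_1(a)\bigl(2^a + 4^a + 2\bigr) + 4^a < 2^a.\]
If I can verify this for all $a \leq -3.02$, then by continuity the inequality persists on some interval $(-\infty, \tau_1)$ with $\tau_1 > -3.02$, and Theorem \ref{Thm3.3} yields $\overline{R(c)} = \overline{\sigma_c(V_1)} \cup \overline{\sigma_c(S_1)}$ as a disjoint union of two nonempty closed sets, i.e.\ $\overline{R(c)}$ is separated, whenever $a \leq -3.02$.

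The central step is a monotonicity reduction. Dividing the target inequality by $2^a > 0$ recasts it as $g(a) < 1$, where
\[g(a) := L_1(a) + L_1(a)\cdot 2^a + 2\,L_1(a)\cdot 2^{-a} + 2^a.\]
Each of the four terms of $g(a)$ can be written in the form $\sum_{x \in X} x^a$ with every base satisfying $x > 1$: explicitly $L_1(a) \cdot 2^a = \sum_{n \geq 3,\ n \text{ odd}}(2n)^a$ and $L_1(a) \cdot 2^{-a} = \sum_{n \geq 3,\ n \text{ odd}}(n/2)^a$, where $n/2 \geq 3/2 > 1$. Since $x^a$ is strictly increasing in $a$ for each $x > 1$, the function $g$ is strictly increasing on $(-\infty, 0)$. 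It therefore suffices to verify the single numerical inequality $g(-3.02) < 1$.

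I would handle the numerical verification by computing $L_1(-3.02)$ via a truncated partial sum (say through $n = 19$) together with an integral tail bound of the form $\sum_{n \geq N,\ n \text{ odd}} n^{-3.02} \leq \tfrac{1}{2}\int_{N-2}^\infty x^{-3.02}\,dx$, obtaining $L_1(-3.02) < 0.0505$, along with $2^{-3.02} < 0.1233$ and $2^{3.02} < 8.112$; these yield
\[g(-3.02) < 0.0505 + 0.0505 \cdot 0.1233 + 2 \cdot 0.0505 \cdot 8.112 + 0.1233 < 0.9995.\]
The main obstacle is the narrowness of this numerical margin: $g(-3.02)$ lies only about $5\times 10^{-3}$ below $1$, so the estimates for $2^{\pm 3.02}$ and for the tail of $L_1$ must each be carried out with enough precision for the strict inequality to survive. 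Beyond that, the proof is a routine combination of the monotonicity reduction and the invocation of Theorem \ref{Thm3.3}.
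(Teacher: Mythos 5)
Your proposal is correct, and it verifies the key inequality by a genuinely different route than the paper. Both arguments run through the proof of Theorem \ref{Thm3.3} with $k=1$ (so $p_k=2$, $L_1(a)=(1-2^a)\zeta(-a)-1$), using the fact that $\tau_1$ may be taken to be any number such that \eqref{Eq1} holds for all $a<\tau_1$, and both conclude that $\overline{\sigma_c(S_1)}$ and $\overline{\sigma_c(V_1)}$ are disjoint closed sets whose union is $\overline{R(c)}$. The difference lies in how \eqref{Eq1} is checked on $(-\infty,-3.02]$: the paper rewrites it as \eqref{Eq2}, proves that inequality analytically for $a\le -5$, and disposes of $-5<a<-3.02$ by several hundred grid evaluations of $F$ combined with a bound on $F'$; you instead divide \eqref{Eq1} by $2^a$ and observe that the resulting left-hand side $g(a)=L_1(a)\bigl(1+2^a+2\cdot 2^{-a}\bigr)+2^a$ is a sum of terms $x^a$ with all bases $x>1$ (namely $n$, $2n$, $n/2$ for odd $n\ge 3$, and $2$), hence increasing in $a$, so a single evaluation $g(-3.02)<1$ covers the entire range. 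This monotonicity reduction is slicker and buys a substantial saving over the paper's piecewise analytic-plus-computational verification; one small correction is that $g$ is defined and increasing on $(-\infty,-1)$, not $(-\infty,0)$, since $L_1(a)$ diverges for $a\ge -1$ (immaterial here, as $-3.02<-1$). A caution on the numerics: with truncation at $n=19$ and the tail bound $\tfrac{1}{2}\int_{19}^{\infty}x^{-3.02}\,dx\approx 0.00065$, the upper bound for $L_1(-3.02)$ comes out to about $0.05050$, a hair above your quoted $0.0505$; summing a couple more terms, or using the convexity bound $\sum_{\mathrm{odd}\,n\ge 21}n^{-3.02}\le \tfrac{1}{2}\int_{20}^{\infty}x^{-3.02}\,dx$, gives $L_1(-3.02)<0.0505$ cleanly (the true value is about $0.05044$), and even the bound $0.05051$ still yields $g(-3.02)<0.9994<1$, so your conclusion stands (the actual margin is about $1.7\times 10^{-3}$, somewhat tighter than the $5\times 10^{-3}$ you estimated).
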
 
\begin{proof} 
Preserve the notation from the proof of Theorem \ref{Thm3.3}. Note that \[L_1(a)=-1+\sum_{2\nmid v}v^a=-1+\zeta(-a)(1-2^a).\] Therefore, when $k=1$, \eqref{Eq1} becomes \[(-1+\zeta(-a)(1-2^a))(2^a+2^{2a}+2)+2^{2a}<2^a.\] We may rewrite this inequality as \[\zeta(-a)(1-2^a)<\frac{2^a-2^{2a}}{2^a+2^{2a}+2}+1.\] Noting that $\displaystyle{\frac{2^a-2^{2a}}{2^a+2^{2a}+2}+1=\frac{2+2^{a+1}}{2^a+2^{2a}+2}}$ and dividing each side of this last inequality by $1-2^a$ yields 
\begin{equation} \label{Eq2} 
\zeta(-a)<\frac{1+2^a}{1-2^{a-1}-2^{3a-1}}.
\end{equation} Therefore, we simply need to show that \eqref{Eq2} holds for all $a<-3.02$. If $a\leq -5$, then $\displaystyle{\frac{2^{a+1}}{-(a+1)}\leq 2^{a-1}}$, so \[\zeta(-a)(1-2^{a-1}-2^{3a-1})\leq \left(1+2^a+\int_2^\infty t^adt\right)(1-2^{a-1})\] \[=\left(1+2^a+\frac{2^{a+1}}{-(a+1)}\right)(1-2^{a-1})\leq 1+2^a+\frac{2^{a+1}}{-(a+1)}-2^{a-1}\leq 1+2^a.\] Hence, \eqref{Eq2} holds for $a\leq -5$. We now show that \eqref{Eq2} holds for $-5<a<-3.02$. The reader who wishes to evade the banalities of the following fairly computational argument may wish to simply plot the values of $\zeta(-a)$ and $\displaystyle{\frac{1+2^a}{1-2^{a-1}-2^{3a-1}}}$ to see that \eqref{Eq2} appears to hold for these values of $a$. 
\par 
Let $F(x)=\displaystyle{\zeta(x)-\frac{1+2^{-x}}{1-2^{-x-1}-2^{-3x-1}}}$ and $I=(3.02,5)$ so that our goal is to prove that $F(x)<0$ for all $x\in I$. In order to reduce the number of necessary computations, we will partition $I$ into the two intervals $I_1=(3.02,3.22]$ and $I_2=(3.22,5)$. For all $x\in I$, we have \[F'(x)=\zeta'(x)+\frac{2^{2x+1}(3\cdot 2^x+3\cdot2^{3x}+2)\log 2}{(2^x-1)^2(2^x+2^{2x+1}+1)^2}\] \[\leq\frac{2^{2x+1}(3\cdot2^{3x}+2^{2x})\log 2}{(2^x-1)^2(2^{2x+1})^2}=\frac{(3\cdot2^x+1)\log 2}{2(2^x-1)^2}.\] Now, if $x\in I_1$, then \[F'(x)\leq\frac{(3\cdot2^x+1)\log 2}{2(2^x-1)^2}<\frac{(3\cdot2^{3.22}+1)\log 2}{2(2^{3.02}-1)^2}<0.2.\] Numerical calculations show that if \[x\in\left\{3.02+\frac{n}{2000}\colon n\in\{0,1,2,\ldots,400\}\right\},\] then $\displaystyle{F(x)<-\frac{1}{10000}}$. Because $F$ is continuous, we see that \[F(x)<-\frac{1}{10000}+0.2\left(\frac{1}{2000}\right)=0\] for all $x\in I_1$. If $x\in I_2$, then \[F'(x)\leq\frac{(3\cdot2^x+1)\log 2}{2(2^x-1)^2}<\frac{(3\cdot2^5+1)\log 2}{2(2^{3.22}-1)^2}<0.5.\] For all \[x\in\left\{3.22+\frac{7n}{500}\colon n\in\{0,1,2,\ldots,127\}\right\},\] numerical calculations show that $\displaystyle{F(x)<-\frac{7}{1000}}$. Because $F$ is continuous, we see that \[F(x)<-\frac{7}{1000}+0.5\left(\frac{7}{500}\right)=0\] for all $x\in I_2$. 
\end{proof}  
\section{Concluding Remarks and Open Problems} 
\begin{figure} 
\begin{center}
\epsfysize=8.6cm \epsfbox{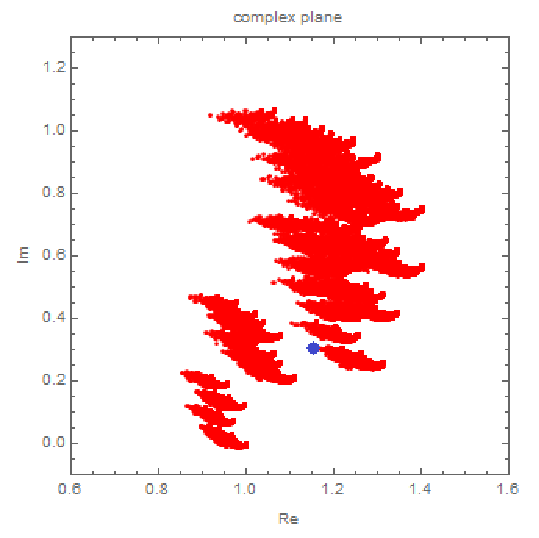}
\\ 
\epsfysize=8.6cm \epsfbox{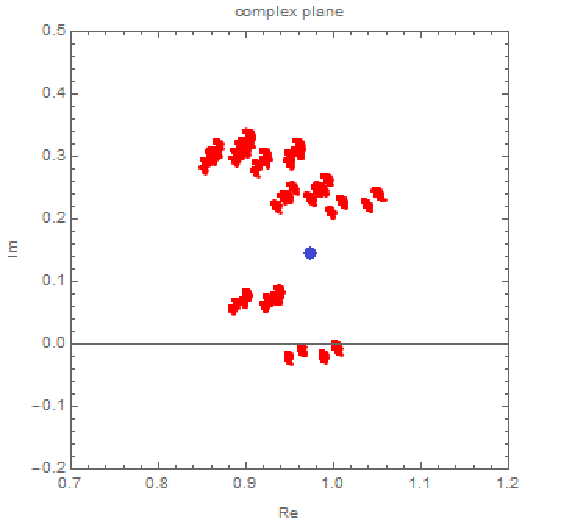}
\caption{Plots of $\sigma_c(n)$ for $1\leq n\leq 10^6$. The top image shows a plot for $c=-1.3+i$, and the bottom shows a plot for $c=-2+2i$. In each image, the blue dot is the point $\zeta(1-c)$.} 
\end{center} 
\end{figure}
We have obtained a decent understanding of the ranges $R(c)$ of the functions $\sigma_c$ for values of $c$ with real part $a\geq-1$, but many problems concerning the sets $R(c)$ that arise when $a<-1$ remain open. For example, it would be quite interesting to determine the values of $c$ for which $\overline{R(c)}$ is connected. Corollary \ref{Cor3.3} provides an initial step in this direction, but there is certainly much work that remains to be done. For example, it seems as though the bound $a<-3.02$ in that corollary is quite weak since the estimates used to derive it are far from optimal. Recall that we derived that bound by showing that if $a<-3.02$, then $\overline{R(c)}$ is the disjoint union of the closed sets $\overline{\sigma_c(S_1)}$ and $\overline{\sigma_c(V_1)}$ defined in the proof of Theorem \ref{Thm3.3}. We remark, however, that it might be more useful to look at the values of $c$ for which $\overline{\sigma_c(V_2)}$ is disjoint from $\overline{\sigma_c(S_1\cup S_2)}$. Indeed, if we confine $c$ to the real axis and decrease $c$ from $-1.5$ to $-2$, we will see that $\overline{R(c)}$ begins as a connected set and then separates into the disjoint union of the two connected sets $\overline{\sigma_c(V_2)}$ and $\overline{\sigma_c(S_1\cup S_2)}$. More formally, one may use the methods described in \cite{Defant14} to show that there exist constants $\eta\approx 1.8877909$ and $\kappa\approx1.9401017$ such that if $c$ is real and $-\kappa<c<-\eta$, then $\overline{R(c)}$ is the disjoint union of the two connected sets $\overline{\sigma_c(V_2)}$ and $\overline{\sigma_c(S_1\cup S_2)}$ (this phenomenon essentially occurs because the largest value of $p_{j+1}/p_j$ occurs when $j=2$). Nonetheless, a full determination of those complex $c$ for which $\overline{R(c)}$ is connected seems to require knowledge about the specific shapes of the sets $\overline{R(c)}$. Using Mathematica to plot points of some of these sets allows one to see the emergence of sets that appear to have certain fractal-like properties (see Figure 1). However, as we can only plot finitely many points, it is difficult to predict the shapes of the full sets $R(c)$ and their closures. 
\par 
We mention once again the open problem of determining the values of $\mathcal A(c)$ and $C(c)$ defined after Corollary \ref{Cor3.2} above.
Observe the point $\zeta(1-c)$ plotted in the top image of Figure 1 (corresponding to $c=-1.3+i$). Upon visual inspection, this point does not appear to be the centroid $C(c)$ of $\overline{R(c)}$. This is likely due to the fact that points of $R(c)$ clustered in the lower part of the image are packed more densely than those in the middle part. Finally, we remark that, while investigating the topics discussed in Section 3, the author found that it would be useful to have good upper and lower bounds for $\Theta(c)$. Thus, we state the derivation of such bounds as an additional potential topic for future research.  
\section{Acknowledgments} 
The author would like to express his gratitude to the anonymous referee who read carefully through this manuscript and gave several helpful suggestions.

\end{document}